\newcommand{\ba}{\begin{array}}
\newcommand{\eea}{\end{eqnarray}}
\newcommand{\ea}{\end{array}}
\newtheorem{definition}{Definition}[section]
\newtheorem{theorem}[definition]{Theorem}
\newtheorem{lemma}[definition]{Lemma}
\newtheorem{proposition}[definition]{Proposition}
\newtheorem{corollary}[definition]{Corollary}
\newtheorem{remark}[definition]{Remark}
\begin{document}
\title[Thurston's h-principle and Flexibility of Poisson Structures]{Thurston's h-principle and Flexibility of Poisson Structures}
\author[S. Mukherjee]{Sauvik Mukherjee}
\address{Presidency University.\\
e-mail:mukherjeesauvik@yahoo.com\\}
\keywords{Poisson Structures,Almost Symplectic Foliations,$h$-principle}

\begin{abstract} 
We prove an analogue of Thurston's h-principle for $2$-dimensional foliations on manifolds of dimension bigger or equal to $4$, in the presence of a fiber-wise non-degenerate $2$-form. This helps us understand the flexibility of rank $2$ regular Poisson structures on open manifolds with dimension bigger or equal to $4$ and it also helps us understand the flexibility of Poisson structures (not regular) on closed $4$-manifolds. 
\end{abstract}
\maketitle

\section{introduction} An h-principle for Poisson structures on open manifolds has been proved by Fernandes and Frejlich in \cite{Fernandes}. We state their result below.\\

 Let $M^{2n+q}$ be a $C^{\infty}$-manifold equipped with a co-dimension-$q$ foliation $\mathcal{F}_0$ and a $2$-form $\omega_0$ such that $(\omega_0^n)_{\mid T\mathcal{F}_0}\neq 0$. Denote by $Fol_q(M)$ and $Dist_q(M)$ the spaces of co-dimension-$q$ foliations and distributions on $M$ respectively identified as a subspace (the entire space in case of distributions) of $\Gamma(Gr_{2n}(M))$, where $Gr_{2n}(M)\stackrel{pr}{\to} M$ be the grassmann bundle, i.e, $pr^{-1}(x)=Gr_{2n}(T_xM)$ and $\Gamma(Gr_{2n}(M))$ is the space of sections of $Gr_{2n}(M)\stackrel{pr}{\to} M$ with compact open topology. Define \[\Delta_q(M),\ \bar{\Delta}_q(M)\subset Dist_q(M)\times \Omega^2(M)\] \[\Delta_q(M):=\{(\mathcal{F},\omega):\omega^n_{\mid T\mathcal{F}}\}\neq 0\] \[\bar{\Delta}_q(M):=\{(\mathcal{D},\omega):\omega^n_{\mid \mathcal{D}}\}\neq 0\] Obviously $(\mathcal{F}_0,\omega_0)\in \Delta_q(M)$. Inthis setting Fernandes and Frejlich has proved the following 

\begin{theorem}(\cite{Fernandes})
\label{Fernandes}
Let $M^{2n+q}$ be an open manifold with $(\mathcal{F}_0,\omega_0)\in \Delta_q(M)$ be given. Then there exists a homotopy $(\mathcal{F}_t,\omega_t)\in \Delta_q(M)$ such that $\omega_1$ is $d_{\mathcal{F}_1}$-closed (actually exact). 
\end{theorem}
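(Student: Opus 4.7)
The plan is to combine Gromov's h-principle for open, $\mathrm{Diff}(M)$-invariant relations on open manifolds with the Haefliger--Thurston integrability theorem for distributions. Since the target condition is that $\omega_1$ be \emph{exact}, I would build exactness into the state space from the start: work with pairs $(\mathcal{D},\alpha)\in Dist_q(M)\times\Omega^1(M)$, set $\omega:=d\alpha$, and impose the relation $\mathcal{R}:\ (d\alpha)^n_{\mid \mathcal{D}}\neq 0$. This is an open, diffeomorphism-invariant differential relation on the associated jet bundle; a pointwise linear-algebra argument shows that over every codimension-$q$ distribution one can prescribe a $1$-jet of $\alpha$ satisfying it, so $\mathcal{R}$ admits formal sections extending the initial data $(T\mathcal{F}_0,\omega_0)$ (after writing $\omega_0$ as the differential of a suitable formal $1$-form along the section).

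First, since $M$ is open, Gromov's theorem yields the parametric h-principle for $\mathcal{R}$: any formal homotopy from $(T\mathcal{F}_0,\alpha_0)$ to a target is deformed into a genuine homotopy of holonomic sections $(\mathcal{D}_t,\alpha_t)$, all inside $\mathcal{R}$. Because the formal side is affine and unobstructed on an open manifold, one can produce a homotopy ending at a pair $(\mathcal{D}_1,\alpha_1)$ with $d\alpha_1$ non-degenerate on $\mathcal{D}_1$. This deals with the form; the distribution $\mathcal{D}_1$ will generally not be integrable. To repair integrability I would apply the Haefliger--Thurston h-principle on the open manifold $M$: in codimension $q\geq 1$ every distribution is homotopic through distributions to an integrable one, and parametrically in any $C^0$-neighbourhood of the original path. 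Applied to $\mathcal{D}_t$, this produces foliations $\mathcal{F}_t$ with $T\mathcal{F}_t$ $C^0$-close to $\mathcal{D}_t$. Since $(d\alpha_t)^n_{\mid(-)}\neq 0$ is an open $C^0$-condition in the distribution variable, $d\alpha_t$ remains leafwise non-degenerate on $T\mathcal{F}_t$, giving the required path $(\mathcal{F}_t,d\alpha_t)$ in $\Delta_q(M)$ with $\omega_1:=d\alpha_1$ globally exact, hence $d_{\mathcal{F}_1}$-exact.

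The \textbf{main obstacle} is the parametric compatibility of the two h-principles. Concretely one must ensure that the Haefliger--Thurston approximation $\mathcal{F}_t$ of $\mathcal{D}_t$ is uniform enough along the entire path (not just at each time separately) so that the non-degeneracy constant of $d\alpha_t$ on $T\mathcal{F}_t$ does not collapse at some intermediate time, and that the starting foliation $\mathcal{F}_0$ is preserved by the approximation (a rel-$t=0$ version of Haefliger--Thurston). Both issues are typically settled by a holonomic-approximation argument with an auxiliary smallness parameter tuned to $\inf_t\|(d\alpha_t)^n_{\mid\mathcal{D}_t}\|$ and to the distance from $\mathcal{D}_0=T\mathcal{F}_0$ to the boundary of $\mathcal{R}$, together with a simultaneous small adjustment of $\alpha_t$ to compensate for the perturbation of $\mathcal{D}_t$ into $T\mathcal{F}_t$.
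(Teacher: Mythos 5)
The fatal step is your use of Haefliger--Thurston as a \emph{$C^0$-approximation} device. The integrability h-principle on open manifolds says that a distribution is \emph{homotopic} (through distributions) to an integrable one, and parametrically in the sense of weak homotopy equivalence of classifying spaces; it does \emph{not} say that the resulting foliation $\mathcal{F}_t$ can be chosen with $T\mathcal{F}_t$ $C^0$-close to $\mathcal{D}_t$. Generic distributions are not $C^0$-approximable by integrable ones, and in Thurston's actual construction (jiggling, civilization, filling holes --- the very scheme reproduced in this paper) the tangent planes of the output foliation differ drastically from the input plane field inside the holes. Consequently your appeal to the openness of $(d\alpha_t)^n_{\mid(-)}\neq 0$ in the distribution variable does not apply, and there is no reason for $d\alpha_t$ to remain nondegenerate on $T\mathcal{F}_t$. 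Indeed, if your step were valid, Theorem \ref{Main} of this paper would be an immediate corollary of Thurston's theorem plus openness, and the entire analysis of Sections 4--5 (carrying the $2$-form through the civilization and hole-filling steps) would be unnecessary. A secondary but related defect: the theorem demands a path in $\Delta_q(M)$, i.e.\ through genuine foliations for \emph{every} $t$, whereas your Gromov stage produces a path through non-integrable distributions; you would have to replace the whole path, not just its endpoint, and your only tool for that is the invalid approximation step.

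The argument of Fernandes--Frejlich avoids this entirely by never leaving the space of foliations: since $M$ is open, one compresses $M$ by an isotopy $\phi_t$ into a neighborhood of a positive-codimension core, applies holonomic approximation there \emph{only to the $1$-form} (producing $\alpha$ with $d\alpha$ close to $\omega_0$, hence nondegenerate on $T\mathcal{F}_0$ near the core), and then sets $\mathcal{F}_t=\phi_t^*\mathcal{F}_0$, $\omega_1=d(\phi_1^*\alpha)$. Integrability of every $\mathcal{F}_t$ is automatic because each is a pullback of $\mathcal{F}_0$ under a diffeomorphism onto its image; the h-principle machinery is spent on the form alone. Your instinct to encode exactness via a primitive $\alpha$ and to exploit openness and Diff-invariance is the right one, but the division of labor must be ``move the foliation only by ambient isotopy, deform the form by holonomic approximation,'' not ``deform the distribution freely, then re-integrate.''
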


In the language of poisson geometry the above result \ref{Fernandes} takes the following form. Let $\pi \in \Gamma(\wedge ^2TM)$ be a bi-vectorfield on $M$, define $\#\pi:T^*M\to TM$ as $\#\pi(\eta)=\pi(\eta,-)$. If $Im(\#\pi)$ is a regular distribution then $\pi$ is called a regular bi-vectorfield.

 \begin{theorem}
 \label{Fernandes-1}
 Let $M^{2n+q}$ be an open manifold with a regular bi-vectorfield $\pi_0$ on it such that $Im(\#\pi_0)$ is an integrable distribution then $\pi_0$ can be homotoped through such bi-vectorfields to a poisson bi-vectorfield $\pi_1$.
 \end{theorem}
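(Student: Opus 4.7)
My approach is a dictionary argument that reduces Theorem \ref{Fernandes-1} to Theorem \ref{Fernandes}. A regular bi-vectorfield $\pi$ with $\mathrm{Im}(\#\pi)$ integrable is essentially the same data as a pair $(\mathcal{F}, \omega) \in \Delta_q(M)$: given $\pi$, take $\mathcal{F} = \mathrm{Im}(\#\pi)$ and let $\omega$ be any extension to $M$ of the leaf-wise $2$-form inverting $\pi|_{T\mathcal{F}}$; conversely, given $(\mathcal{F}, \omega) \in \Delta_q(M)$, invert $\omega|_{T\mathcal{F}}$ leaf-wise to recover a regular bi-vectorfield $\pi \in \Gamma(\wedge^2 T\mathcal{F}) \subset \Gamma(\wedge^2 TM)$. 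Under this correspondence, the Jacobi identity $[\pi, \pi] = 0$ is equivalent to $d_\mathcal{F}(\omega|_{T\mathcal{F}}) = 0$; this is the classical equivalence between regular Poisson structures and leaf-wise symplectic foliations.

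Concretely, starting from $\pi_0$, I set $\mathcal{F}_0 := \mathrm{Im}(\#\pi_0)$, leaf-wise invert to get a non-degenerate $2$-form on $T\mathcal{F}_0$, and extend it (via any complementary subbundle to $T\mathcal{F}_0$ in $TM$, obtained for instance from a Riemannian metric) to a global $\omega_0 \in \Omega^2(M)$. Since $\omega_0^n|_{T\mathcal{F}_0}$ depends only on $\omega_0|_{T\mathcal{F}_0}$, we obtain $(\mathcal{F}_0, \omega_0) \in \Delta_q(M)$. Theorem \ref{Fernandes} then supplies a homotopy $(\mathcal{F}_t, \omega_t) \in \Delta_q(M)$ with $\omega_1$ being $d_{\mathcal{F}_1}$-closed. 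For each $t$ I define $\pi_t$ by leaf-wise inverting $\omega_t|_{T\mathcal{F}_t}$; since $(\mathcal{F}_t, \omega_t) \in \Delta_q(M)$ for all $t$, each $\pi_t$ is a regular bi-vectorfield with $\mathrm{Im}(\#\pi_t) = T\mathcal{F}_t$ integrable, $\pi_0$ agrees with the given data by construction, and $\pi_1$ is Poisson by the equivalence recalled above.

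All the genuine h-principle content lies in Theorem \ref{Fernandes}; everything else is translation. The only step that merits a separate (but standard) verification is the Schouten-bracket computation establishing the equivalence $[\pi, \pi] = 0 \Longleftrightarrow d_\mathcal{F}(\omega|_{T\mathcal{F}}) = 0$ for regular bi-vectorfields with integrable image. This is the main (and rather mild) obstacle to writing out the argument in detail; one also verifies that at each $t$ the distribution $\mathrm{Im}(\#\pi_t)$ remains regular and integrable, which is automatic from $(\mathcal{F}_t, \omega_t) \in \Delta_q(M)$ holding throughout the homotopy.
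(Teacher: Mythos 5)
Your proposal is correct and is exactly the translation the paper relies on: the paper gives no separate proof of Theorem \ref{Fernandes-1}, presenting it simply as the Poisson-geometric reformulation of Theorem \ref{Fernandes} via the standard dictionary between regular bi-vectorfields with integrable image and pairs $(\mathcal{F},\omega)\in\Delta_q(M)$, with $[\pi,\pi]=0$ corresponding to $d_{\mathcal{F}}$-closedness of the leaf-wise form. Your write-up fills in that dictionary in the same way, so there is nothing to add.
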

 
 \begin{remark}
 Fernandes and Frejlich has also shown by example in \cite{Fernandes} that if the condition that $Im(\#\pi_0)$ being integrable is removed then the result fails. This is because in general a distribution need not have a foliation in its homotopy class. On open manifolds the obstruction is known by Haefliger in \cite{Haefliger}.  
 \end{remark}

In \ref{Fernandes} above $d_{\mathcal{F}}$ is the tangential exterior derivative, i.e, for $\eta \in \Gamma(\wedge^k T^*\mathcal{F})$, $d_{\mathcal{F}}\eta$ is defined by the following formula \[d_{\mathcal{F}}\eta(X_0,X_1,...,X_k)=\Sigma_i(-1)^iX_i(\eta(X_0,..,\hat{X}_i,..,X_k))\]\[+\Sigma_{i<j}(-1)^{i+j}\eta([X_i,X_j],X_0,..,\hat{X}_i,..,\hat{X}_j,..,X_k)\]where $X_i\in \Gamma(T\mathcal{F})$. So if we extend a $\mathcal{F}$-leafwise closed $k$-form $\eta$, i.e, $d_{\mathcal{F}}\eta=0$, to a form $\eta'$ by the requirement that $ker(\eta')=\nu \mathcal{F}$, where $\nu \mathcal{F}$ is the normal bundle to $\mathcal{F}$, then $d\eta'=0$.\\ 

In order to fix the foliation in \ref{Fernandes} one needs to impose an openness condition on the foliation, we refer the readers to \cite{Bertelson} for precise definition of this openness condition. Under this hypothesis Bertelson proved the following

\begin{theorem}(\cite{Bertelson})
If $(M,\mathcal{F})$ be an open foliated manifold with $\mathcal{F}$ satisfies some openness condition and let $\omega_0$ be a $\mathcal{F}$-leaf wise $2$-form then $\omega_0$ con be homotoped through $\mathcal{F}$-leaf wise $2$-forms to a $\mathcal{F}$-leaf wise symplectic form. 
\end{theorem}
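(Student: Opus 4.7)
The plan is to recast the problem in Gromov's h-principle framework: being a leaf-wise symplectic form is a first-order partial differential relation on sections of $\wedge^2 T^*\mathcal{F}$, with an open part (leaf-wise non-degeneracy) and a linear closed part ($d_{\mathcal{F}}\omega = 0$). The given $\omega_0$, once arranged to be leaf-wise non-degenerate, plays the role of a formal solution, and one wants to deform it within the space of such formal solutions to a genuine solution.

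First I would reduce to producing a leaf-wise \emph{exact} symplectic form $\omega_1 = d_{\mathcal{F}}\alpha$, mirroring Gromov's open-manifold theorem where exactness comes for free. This is natural because the h-principle mechanism below produces leaf-wise primitives on tubes around a skeleton rather than honest closed forms directly, and reduces the relation to one on the space of leaf-wise $1$-forms $\alpha$, with the condition $(d_{\mathcal{F}}\alpha)^n\neq 0$ on each leaf being an open, Diff-invariant, and ample relation in the sense of Gromov.

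Second, the openness condition on $\mathcal{F}$ should be used to construct a sub-complex $K \subset M$ of positive codimension that respects the foliation and onto which $M$ can be retracted in a leaf-wise manner; this is the foliated analog of the core skeleton of an open manifold that fuels Gromov's classical proof. Once such a $K$ is in place, one proceeds inductively over the cells of $K$: on each cell, compress the existing data into an arbitrarily small neighborhood by means of the retraction, then apply convex integration leaf-wise inside that neighborhood to produce a leaf-wise closed and non-degenerate form there, and glue using the openness of the complement of $K$. The convex integration step is essentially Gromov's construction, with $d$ replaced by $d_{\mathcal{F}}$ and all oscillations happening tangent to the leaves.

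The main obstacle will be transverse smoothness. Convex integration is naturally a leaf-wise (in fact essentially one-dimensional) construction, so the resulting leaf-wise $2$-form must be arranged to vary smoothly in the direction normal to $\mathcal{F}$, and leaves that are close together must be deformed in a coherent way. Bertelson's openness condition is calibrated precisely to enforce this transverse coherence by producing a skeleton $K$ whose neighborhoods admit foliated charts in which leaf-wise convex integration can be run with parameters varying smoothly in the transverse direction. Verifying that this condition is strong enough to globalise the convex integration step smoothly across the normal bundle of $\mathcal{F}$ — and to do so parametrically, so that the procedure yields a homotopy rather than merely an endpoint — is the delicate technical core of the argument.
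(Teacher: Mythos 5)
The paper does not prove this statement at all: it is quoted as a known result of Bertelson (\cite{Bertelson}) purely as background, so there is no internal proof to compare yours against. Judged on its own terms, your outline correctly identifies the architecture of Bertelson's argument --- reduce to leaf-wise exact symplectic forms $d_{\mathcal{F}}\alpha$ so that the relation becomes an open, foliated-isotopy-invariant relation on leaf-wise $1$-forms; use the openness hypothesis on $\mathcal{F}$ to produce a positive-codimension core compatible with the foliation; compress onto it and solve near the core; globalize. That is indeed the foliated adaptation of Gromov's h-principle for open Diff-invariant relations on open manifolds.

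Two caveats. First, Bertelson's actual mechanism near the core is compression by foliated isotopies combined with holonomic approximation for open relations invariant under such isotopies, rather than leaf-wise convex integration; convex integration is a plausible alternative (the relation $(d_{\mathcal{F}}\alpha)^n\neq 0$ is ample in the leaf directions), but it is not the route taken in \cite{Bertelson}, and if you pursue it you must justify ampleness and the parametric version yourself. Second, and more importantly, what you label ``the delicate technical core'' --- the precise formulation of the openness condition and the proof that it yields a core onto which $M$ retracts by \emph{foliated} isotopies with smooth transverse dependence --- is exactly the substance of Bertelson's theorem; acknowledging it without supplying it leaves the argument a strategy statement rather than a proof. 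As a blind reconstruction of the cited result your sketch is directionally sound but incomplete at its decisive step.
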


She also constructed counter examples in \cite{Bertelson1} that without this openness condition the above theorem fails. A contact analogue of Bertelson's result on any manifold (open or closed) has recently been proved in \cite{Over-twisted} by Borman, Eliashberg and Murphy. \\

 \ref{Fernandes} and \ref{Fernandes-1} has been generalized to closed manifolds in \cite{Mukherjee}. 
 
 \begin{theorem}
 \label{Mukherjee}
 Let $M^{2n+q}$ be a closed manifold with $q=2$ and $(\mathcal{F}_0,\omega_0)\in \Delta_q(M)$ be given. Then there exists a homotopy $\mathcal{F}_t$ of singular foliations on $M$ with singular locus $\Sigma_t$ and a homotopy of two forms $\omega_t$ such that the restriction of $(\omega_t)$ to $T\mathcal{F}_t$ is non-degenerate and $\omega_1$ is $d_{\mathcal{F}_1}$-closed.
 \end{theorem}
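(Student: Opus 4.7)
The natural plan is to reduce to Fernandes and Frejlich's open-manifold Theorem \ref{Fernandes} by introducing a lower-dimensional singular locus that makes the complement open. The singular set $\Sigma_t$ in the conclusion will be the trace of this excision, with $\Sigma_0 = \emptyset$ growing to $\Sigma_1 = \Sigma$ for a suitably chosen subpolyhedron $\Sigma \subset M$ of positive codimension.

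First I would pick $\Sigma \subset M$ (for instance a finite collection of points, one in each top handle of a handle decomposition of the closed manifold $M^{2n+2}$) so that $M \setminus \Sigma$ is open in the sense required by Theorem \ref{Fernandes} and Haefliger's theorem. Choose a small tubular neighborhood $N(\Sigma)$ and an isotopy $\Sigma_t$ with $\Sigma_0 = \emptyset$ and $\Sigma_1 = \Sigma$.

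Next I would apply a relative version of Theorem \ref{Fernandes} to $(\mathcal{F}_0|_U,\omega_0|_U) \in \Delta_2(U)$, where $U := M \setminus N(\Sigma)$, producing a homotopy $(\mathcal{F}_t^U,\omega_t^U) \in \Delta_2(U)$ that is stationary on a collar of $\partial U$ and terminates with $\omega_1^U$ being $d_{\mathcal{F}_1^U}$-closed. On $N(\Sigma)$ I would install a standard codimension-$2$ singular foliation model with singular locus $\Sigma_t$ (a radial-type model near the isolated points of $\Sigma$) together with a compatible leafwise non-degenerate $2$-form. Stationarity near $\partial U$ lets the two pieces glue smoothly to a global pair $(\mathcal{F}_t,\omega_t)$ on $M$.

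The main obstacle is constructing the singular model on $N(\Sigma)$: it must match $(\mathcal{F}_t^U,\omega_t^U)$ on the collar $\partial N(\Sigma)\times[0,1]$ through a family in $\Delta_2$, degenerate along $\Sigma_t$ while keeping $\omega_t|_{T\mathcal{F}_t}$ non-degenerate on the regular part, and realize a genuine homotopy from the regular pair at $t=0$ to the singular pair at $t=1$. This is precisely where the author's Thurston-type h-principle for foliations in the presence of a fibrewise non-degenerate $2$-form (announced in the paper's abstract) is indispensable: it should guarantee that any distribution on $N(\Sigma)$ with the required non-degeneracy condition can be homotoped through $\Delta_2$ to a singular foliation with prescribed singular locus and prescribed boundary behavior, after which the global conclusion follows by assembling the two pieces.
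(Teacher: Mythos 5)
Theorem \ref{Mukherjee} is not proved in this paper at all: it is quoted from the author's earlier work \cite{Mukherjee}, and in the present paper it serves as an \emph{input} to Corollary \ref{Main-coro}. So there is no in-paper proof to compare yours against; I can only assess your proposal on its own terms, and as written it has two concrete problems.

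First, your gluing scheme is internally inconsistent. You ask the homotopy $(\mathcal{F}_t^U,\omega_t^U)$ on $U=M\setminus N(\Sigma)$ to be \emph{stationary on a collar of $\partial U$} and simultaneously to end with $\omega_1^U$ being $d_{\mathcal{F}_1^U}$-closed. But $\omega_0$ is only assumed leafwise non-degenerate, not leafwise closed, so a homotopy that is constant near $\partial U$ cannot make the form leafwise closed there. A relative version of Theorem \ref{Fernandes} is only available relative to a set where the conclusion already holds at $t=0$. The standard way around this (and the reason the conclusion involves a singular locus at all) is the opposite of what you describe: one does \emph{not} freeze the data near $\partial N(\Sigma)$, but rather lets the open-manifold h-principle push everything toward $\Sigma$, and the price of closing the form up over all of the compact $M$ is the appearance of singularities of $\mathcal{F}_1$ along $\Sigma$. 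Your proposal never explains how the non-closed collar data is to be absorbed. Second, the appeal to ``the author's Thurston-type h-principle'' to build the singular model on $N(\Sigma)$ is misplaced: Theorem \ref{Main} takes a $2$-distribution with a fibrewise non-degenerate $2$-form and homotopes it to a \emph{nonsingular} integrable one; it says nothing about producing singular foliations with prescribed singular locus and prescribed germ along a boundary, which is exactly the hard step you delegate to it. It is also backwards relative to the paper's architecture, where \ref{Mukherjee} is established independently (in \cite{Mukherjee}) and then combined with \ref{Main}. You also do not address why the hypothesis $q=2$ is needed, which any correct argument must use in constructing the local singular model.
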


In terms of poisson geometry \ref{Mukherjee} states 

\begin{theorem}
\label{Mukherjee-1}
Let $M^{2n+q}$ be a closed manifold with $q=2$ and $\pi_0$ be a regular bi-vectorfield of rank $2n$ on it such $Im(\#\pi_0)$ is integrable distribution. Then there exists a homotopy of bi-vectorfields $\pi_t,\ t\in I$ (not regular) such that $Im(\#\pi_t)$ integrable and $\pi_1$ is a poisson bi-vectorfield.
\end{theorem}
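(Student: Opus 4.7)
The plan is to translate the statement from the language of bi-vectorfields into that of foliations with a compatible $2$-form, apply Theorem \ref{Mukherjee}, and translate back. Given the regular bi-vectorfield $\pi_0$ of rank $2n$ with $Im(\#\pi_0)$ integrable, let $\mathcal{F}_0$ be the codimension-$2$ foliation integrating $Im(\#\pi_0)$, and let $\omega_0^{\text{leaf}}$ be the leafwise symplectic form obtained by inverting $\pi_0$ on each leaf. Extend $\omega_0^{\text{leaf}}$ to a global $2$-form $\omega_0$ on $M$ by choosing a complement $\nu$ to $T\mathcal{F}_0$ and requiring $\nu \subset \ker(\omega_0)$. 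By construction $(\mathcal{F}_0,\omega_0) \in \Delta_2(M)$, and under the correspondence ``regular bi-vector $\longleftrightarrow$ foliation plus leafwise symplectic form,'' $\pi_0$ is exactly the dual of $\omega_0|_{T\mathcal{F}_0}$.

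Next I would apply Theorem \ref{Mukherjee} directly to $(\mathcal{F}_0,\omega_0)$ to produce a homotopy $(\mathcal{F}_t,\omega_t)$ of singular foliations and $2$-forms, with singular locus $\Sigma_t$, satisfying $\omega_t|_{T\mathcal{F}_t}$ non-degenerate on $M\setminus\Sigma_t$ and $d_{\mathcal{F}_1}\omega_1 = 0$. On $M\setminus\Sigma_t$ the foliation $\mathcal{F}_t$ is genuinely regular of codimension $2$, so I can invert leaf-wise: define $\#\pi_t:T^*M \to T\mathcal{F}_t \subset TM$ by $\omega_t(\#\pi_t(\alpha),X) = \alpha(X)$ for every $X \in T\mathcal{F}_t$, which determines a smooth bi-vectorfield $\pi_t$ on $M\setminus\Sigma_t$ with $Im(\#\pi_t) = T\mathcal{F}_t$ on that open set.

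The main obstacle is the extension of $\pi_t$ across $\Sigma_t$. A priori, the leafwise inverse of $\omega_t$ can blow up as one approaches a singular point, since the leaves themselves degenerate there. To handle this one must appeal to the explicit local model of the singular foliations produced by the construction in \cite{Mukherjee}: near $\Sigma_t$ the pair $(\mathcal{F}_t,\omega_t)$ is given by a controlled degeneration in which $\omega_t|_{T\mathcal{F}_t}$ vanishes along $\Sigma_t$ in a way compatible with the drop in rank of $T\mathcal{F}_t$, so that the dual bi-vectorfield $\pi_t$ extends smoothly to all of $M$ and simply drops rank on $\Sigma_t$. This is the step that requires a careful unpacking of the construction in \cite{Mukherjee} and is where the bulk of the work lies. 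Granting this extension, $Im(\#\pi_t)\subset T\mathcal{F}_t$ everywhere, so $Im(\#\pi_t)$ is integrable in the Stefan-Sussmann sense since $T\mathcal{F}_t$ is.

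Finally, the Poisson condition is automatic: on the open dense set $M\setminus\Sigma_1$ the standard dictionary between leafwise symplectic forms and regular Poisson bi-vectors identifies $d_{\mathcal{F}_1}\omega_1 = 0$ with $[\pi_1,\pi_1]=0$, and by continuity of the Schouten-Nijenhuis bracket the identity $[\pi_1,\pi_1] = 0$ extends to all of $M$. Thus $\pi_1$ is a genuine Poisson bi-vectorfield, completing the homotopy.
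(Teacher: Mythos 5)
Your translation dictionary is the right one, and it is essentially all the paper itself offers: Theorem \ref{Mukherjee-1} is presented with no proof at all, simply as the Poisson-geometric reformulation of Theorem \ref{Mukherjee}, both being quoted from \cite{Mukherjee}. The forward direction (regular $\pi_0$ with $Im(\#\pi_0)$ integrable $\leadsto$ $(\mathcal{F}_0,\omega_0)\in\Delta_2(M)$ by inverting $\pi_0$ leafwise and extending with kernel a chosen complement) and the closing step (identifying $d_{\mathcal{F}_1}\omega_1=0$ with $[\pi_1,\pi_1]=0$ on the regular part and extending by density) are both standard and correctly executed.

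The one genuine gap is the one you name yourself: the smooth extension of the dual bivector $\pi_t$ across the singular locus $\Sigma_t$. This is not a formality that can be dispatched abstractly --- whether the leafwise inverse of $\omega_t$ extends smoothly (and with the rank dropping, rather than blowing up) across $\Sigma_t$ depends entirely on the local normal form of the singular pair $(\mathcal{F}_t,\omega_t)$ produced in \cite{Mukherjee}, and ``granting this extension'' is precisely the point where your argument stops being a proof and becomes a citation. Since the paper under review also resolves this by citation, your sketch is faithful to what the paper does; but be aware that as a self-contained argument it is incomplete, and the statement of Theorem \ref{Mukherjee} as given here (a homotopy of singular foliations with nondegenerate restriction of $\omega_t$) does not by itself supply the data needed to close that gap. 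A second, smaller caveat: once $\mathcal{F}_t$ is singular, ``$Im(\#\pi_t)$ integrable'' must be read in the Stefan--Sussmann sense, as you note; the theorem statement is silent on this, so you should make that reading explicit rather than implicit.
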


Now we come to the main topic of this paper. In \cite{Thurston} Thurston has proved the following 

\begin{theorem}
\label{Thurston}
Any $C^{\infty}$ $2$-plane field on a manifold $M$ of dimension at least $4$ is homotopic to an integrable one. Relative version of this result is also true.
\end{theorem}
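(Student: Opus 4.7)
The plan is to apply Thurston's jiggling technique to $\xi$. Let $\xi$ be a $C^\infty$ $2$-plane field on $M^n$ with $n\geq 4$, so the foliation we seek has rank $2$ and codimension $n-2\geq 2$.

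First, I would fix a smooth triangulation $K$ of $M$ and apply Thurston's jiggling lemma: after iterated barycentric subdivision together with a small generic perturbation of the vertices, we obtain a triangulation $K'$ such that on every simplex $\sigma$ of $K'$, the restriction $\xi|_\sigma$ is $\varepsilon$-close, in the Grassmannian bundle $Gr_2(TM)$, to a constant $2$-plane $V_\sigma$ in some affine chart on $\sigma$, for any prescribed $\varepsilon>0$. The jiggling can moreover be arranged to be the identity on a closed subset $A\subset M$ on which $\xi$ is already integrable, giving the relative version. Second, on each top-dimensional simplex $\sigma^n$ the constant plane field $V_\sigma$ is an integrable model whose leaves are affine translates of $V_\sigma$; denote this local foliation by $\mathcal{F}_\sigma$. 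The straight-line homotopy in $Gr_2(TM)$ connects $\xi|_\sigma$ to $\mathcal{F}_\sigma$ through $2$-plane fields, by $\varepsilon$-closeness.

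Third, I would assemble a global foliation $\mathcal{F}$ on $M$ by patching the $\mathcal{F}_\sigma$ inductively across the skeleta. Over a codim-$1$ face $\tau$ shared by top simplices $\sigma_1,\sigma_2$, the constant planes $V_{\sigma_1}$ and $V_{\sigma_2}$ differ by $O(\varepsilon)$ and can be joined by a short path in $Gr_2(\mathbb{R}^n)$; one realises the transition in a thin collar of $\tau$ by composing with a family of local diffeomorphisms that straighten $V_{\sigma_2}$ back to $V_{\sigma_1}$, producing an integrable $2$-plane field on the collar that matches both sides. Propagating down to lower-dimensional faces, the consistency of these transitions is precisely what the jiggling lemma provides, and the codimension $n-2\geq 2$ ensures that there is enough room in $Gr_2(\mathbb{R}^n)$ to carry out the interpolations coherently and, where necessary, to push any residual non-integrability into a collar neighborhood that is subsequently absorbed.

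The main obstacle will be the jiggling lemma itself. Producing a triangulation globally compatible with an arbitrary plane field is the deep technical input of Thurston's method: the subdivision must be fine enough that each simplex lies in a tiny chart, and the perturbation of vertices must be generic enough that every simplex of every dimension is simultaneously put into the near-constant position with respect to $\xi$. Once jiggling is in place, the inductive assembly over skeleta is essentially combinatorial, and the hypothesis $n-2\geq 2$ is exactly what makes the patching obstructions vanish. The relative version follows directly by performing both the jiggling and the patching rel $A$.
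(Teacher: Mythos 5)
There is a genuine gap. Your outline covers only the first two of the three steps in Thurston's argument as organized in this paper (Section 2: (1) triangulation in general position via jiggling, (2) ``civilization'' near the skeleton, (3) filling the holes), and it is the third step --- which you compress into the phrase ``push any residual non-integrability into a collar neighborhood that is subsequently absorbed'' --- that carries essentially all of the difficulty. After the plane field has been made integrable near the $(n-1)$-skeleton of the triangulation, each top-dimensional simplex still contains a hole diffeomorphic to $D^2\times D^{n-2}$ on whose boundary $\partial D^2\times D^{n-2}$ the plane field induces a foliated $\mathbb{R}^{n-2}$-product over $S^1$, i.e.\ a loop $\gamma$ in $Diff_c\mathbb{R}^{n-2}$ based at the identity. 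Extending the foliation over the hole is a nontrivial filling problem: one needs an explicit fillable model (here, the solid-torus foliation of Subsection 5.2), the fact that a concatenation of fillable loops is fillable (Lemma \ref{Concatenation}), and then the group-theoretic input --- the Haller--Rybicki--Teichmann commutator decomposition (Proposition \ref{HaRyTe}) and Tsuboi's expression of commutators as products of conjugates of a fixed $h\neq id$ (Proposition \ref{Tsu}) --- to write an arbitrary loop as a concatenation of conjugates of the model, hence fillable. None of this appears in your proposal, and there is no way to make the holes ``vanish'' by collar manipulations alone; the hypothesis on codimension enters precisely because $Diff_c\mathbb{R}^{n-2}$ for $n-2\geq 2$ has the required perfectness/fragmentation properties.

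A secondary but real error: joining the near-constant planes $V_{\sigma_1}$ and $V_{\sigma_2}$ by a short path in $Gr_2(\mathbb{R}^n)$ and spreading it over a collar does \emph{not} produce an integrable plane field --- a path of $2$-planes in a chart is generically non-integrable for $n\geq 4$. The actual civilization step (Definition \ref{Civilization-Def}, Proposition \ref{Civilization-Prop}) instead makes the plane field \emph{constant on the fibers} of carefully nested tubular neighborhoods of the simplices of each dimension, with compatibility conditions \textbf{(C)}--\textbf{(E)} controlling how these neighborhoods intersect; integrability near the skeleton is then automatic, but the price is exactly the holes described above.
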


In this paper we prove an analogue of this result in the presence of a fiber-wise non-degenerate $2$-form. The main theorem of this paper is the following. For a $2$-distribution $\mathcal{D}$ and for a section $\tau$ of $\mathcal{D}$, we set $\omega_{\mid \mathcal{D}}=\omega_{\mid \tau}$.

\begin{theorem}
\label{Main}
Let $M^n$ be a manifold of dimension atleast $4$, i.e, $n\geq 4$ and $(\tau_0,\omega_0)\in \bar{\Delta}_{n-2}(M)$. Then $(\tau_0,\omega_0)$ is homotopic to $(\tau_1,\omega_1)$ through a homotopy $(\tau_t,\omega_t)\in \bar{\Delta}_{n-2}(M)$, $t\in [0,1]$ such that $\tau_1$ is integrable.
\end{theorem}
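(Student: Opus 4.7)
The plan is to first invoke Thurston's theorem to homotope $\tau_0$ to an integrable $\tau_1$, and then separately construct $\omega_t$ by exploiting the openness and convexity of the non-degeneracy condition. The key observation is that on an oriented $2$-plane, the condition $\omega|_\tau > 0$ cuts out a half-space of $\wedge^2 T^*M$; hence the good sections form a convex open subbundle, and the $\omega$-homotopy reduces to a partition-of-unity construction independent of the specific form of Thurston's homotopy.

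First I would record the orientation on $\tau_0$ provided by $\omega_0$: since $\omega_0|_{\tau_0}$ is a nowhere-vanishing section of the line bundle $\wedge^2 \tau_0^*$, it orients $\tau_0$. Applying Theorem \ref{Thurston} produces a smooth homotopy $\tau_t$ of $2$-distributions with $\tau_1$ integrable. Writing $Gr_2(M)$ for the Grassmann bundle of $2$-planes and $Gr_2^+(M)$ for the double cover of oriented $2$-planes, the section $[0,1]\times M \to Gr_2(M)$, $(t,p)\mapsto \tau_t(p)$, already lifts over $\{0\}\times M$ by the orientation just fixed; since $\{0\}\times M \hookrightarrow [0,1]\times M$ is a homotopy equivalence, the homotopy lifting property for the cover $Gr_2^+(M)\to Gr_2(M)$ produces a global oriented lift $\tilde\tau_t$.

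Next I would construct $\omega_t$. For each $(t,p) \in [0,1]\times M$ put
\[
C_{t,p} = \{\eta \in \wedge^2 T_p^*M : \eta|_{\tilde\tau_t(p)} > 0\},
\]
an open half-space of $\wedge^2 T_p^*M$. These assemble into an open convex subbundle $C \subset [0,1]\times \wedge^2T^*M$, of which $\omega_0$ is a section over $\{0\}\times M$. Let $V \subset [0,1]\times M$ be the open set of $(t,p)$ with $\omega_0(p) \in C_{t,p}$; it is a neighborhood of $\{0\}\times M$ by openness of non-degeneracy. Cover the remaining region by open sets $U_\alpha$ disjoint from $\{0\}\times M$ and small enough that some explicit $2$-form $\omega_\alpha$ (say a constant $2$-form in a chart, extended by cutoff) lies in $C$ throughout $U_\alpha$. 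Let $\{\rho, \rho_\alpha\}$ be a smooth partition of unity on $[0,1]\times M$ subordinate to $\{V, U_\alpha\}$; since each $U_\alpha$ misses $\{0\}\times M$, one has $\rho \equiv 1$ on $\{0\}\times M$. Set
\[
\omega_t(p) = \rho(t,p)\,\omega_0(p) + \sum_\alpha \rho_\alpha(t,p)\,\omega_\alpha(p).
\]
At each $(t,p)$ this is a convex combination of elements of the convex cone $C_{t,p}$, hence itself lies in $C_{t,p}$, and by construction $\omega_t|_{t=0}=\omega_0$. Thus $(\tau_t,\omega_t) \in \bar{\Delta}_{n-2}(M)$ for all $t$ with $\tau_1$ integrable.

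The proof is essentially soft once Theorem \ref{Thurston} is invoked; there is no hard analytic step. The only places where care is needed are the oriented lift of the homotopy $\tau_t$, which requires the homotopy lifting property for the double cover $Gr_2^+(M)\to Gr_2(M)$, and the partition of unity, which must be arranged so that the initial condition $\omega_0$ is reproduced exactly at $t=0$ rather than only up to perturbation. Both steps are routine thanks to the convexity of the fibers $C_{t,p}$.
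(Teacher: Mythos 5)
Your proof is correct, but it takes a genuinely different and much softer route than the paper. The paper does not treat Thurston's theorem as a black box: it reproves the local statement (Proposition \ref{Key}) by running the Mitsumatsu--Vogt rendering of Thurston's argument --- jiggled triangulations in general position, ``civilization'' of the pair near the skeleton, and explicit hole-filling models on $D^2\times\mathbb{R}^{n-2}$ --- while carrying the $2$-form through every step, and then globalizes chart by chart. You instead invoke Theorem \ref{Thurston} once to get the plane-field homotopy $\tau_t$, and observe that the remaining constraint on $\omega$ is fiberwise an open \emph{half-space} condition once an orientation of $\tau_t(p)$ is chosen; the orientation is propagated along the homotopy by unique lifting through the double cover of oriented $2$-planes, and a partition of unity in the convex fibers $C_{t,p}$ then produces $\omega_t$ extending $\omega_0$ at $t=0$. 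This is a valid argument and shows that, for Theorem \ref{Main} exactly as stated, the $2$-form plays no role in the hard part: it can be restored afterwards by pure convexity. What the paper's longer route buys is the quantitative local/relative statement of Proposition \ref{Key} (the homotopy is constant outside $(-2,2)^n$ and integrability is achieved on a prescribed cube), together with explicit compatible models such as the foliated solid torus with its $2$-form in Subsection \ref{Example}, which keep the construction self-contained and in the form needed for its intended refinements. One small point of hygiene in your write-up: do not actually cut off $\omega_\alpha$ inside $U_\alpha$, since $0\notin C_{t,p}$; simply interpret $\rho_\alpha\omega_\alpha$ as zero outside $\mathrm{supp}(\rho_\alpha)$ and require $\omega_\alpha(p)\in C_{t,p}$ only where $\rho_\alpha>0$, which is all the convex combination uses.
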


Using \ref{Main} in \ref{Fernandes} and \ref{Mukherjee} (or equivalently in \ref{Fernandes-1} and \ref{Mukherjee-1}) we get the following.

\begin{corollary}
\label{Main-coro}
If there exists a regular rank $2$ bivector field $\pi_0$ on an open manifold $M$ of dimension bigger or equal to $4$ then there exists a homotopy $\pi_t,\ t\in [0,1]$ of regular bivector fields of rank $2$ such that $\pi_1$ is regular Poisson.\\

If on the other hand $M$ is closed $4$-manifold and $\pi_0$ a regular rank $2$ bivector field on it then there would exists a homotopy of bivector fields $\pi_t,\ t\in [0,1]$ (not regular) such that $\pi_1$ is Poisson (not regular). 
\end{corollary}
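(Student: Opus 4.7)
The plan is to derive Corollary \ref{Main-coro} by combining Theorem \ref{Main} with the already-established h-principles of Fernandes--Frejlich (Theorem \ref{Fernandes-1}) and of Mukherjee (Theorem \ref{Mukherjee-1}). The essential observation is that a regular rank-$2$ bi-vector field $\pi$ is equivalent data to a rank-$2$ distribution $\tau = \mathrm{Im}(\#\pi)$ together with a $2$-form whose restriction to $\tau$ is the pointwise inverse of $\pi\vert_\tau$, so Theorem \ref{Main} translates directly into a statement about regular rank-$2$ bi-vector fields.

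More precisely, given $\pi_0$ regular of rank $2$, I would set $\tau_0 := \mathrm{Im}(\#\pi_0)$ and choose a global $2$-form $\omega_0$ on $M$ whose restriction to $\tau_0$ is a nowhere-vanishing section of $\wedge^2\tau_0^*$ (one can take the pointwise inverse of $\pi_0\vert_{\tau_0}$ and extend it arbitrarily off $\tau_0$). Then $(\tau_0,\omega_0)\in\bar\Delta_{n-2}(M)$, and Theorem \ref{Main} supplies a homotopy $(\tau_s,\omega_s)\in\bar\Delta_{n-2}(M)$, $s\in[0,1/2]$, with $\tau_{1/2}$ integrable. Inverting $\omega_s\vert_{\tau_s}$ fiberwise and viewing the result as an element of $\Gamma(\wedge^2\tau_s)\subset\Gamma(\wedge^2 TM)$ produces a homotopy of regular rank-$2$ bi-vector fields $\pi_s^{\mathrm{aux}}$ with $\mathrm{Im}(\#\pi_s^{\mathrm{aux}})=\tau_s$; in particular, $\pi_{1/2}^{\mathrm{aux}}$ is a regular rank-$2$ bi-vector field whose image is an integrable distribution.

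At this point the hypotheses of Theorems \ref{Fernandes-1} and \ref{Mukherjee-1} are met by $\pi_{1/2}^{\mathrm{aux}}$. For open $M^n$ with $n\geq 4$, Theorem \ref{Fernandes-1} homotopes $\pi_{1/2}^{\mathrm{aux}}$ through regular rank-$2$ bi-vector fields, all with integrable image, to a regular Poisson bi-vector field; concatenating with the previous homotopy gives a path from $\pi_0$ to a regular Poisson bi-vector field through regular rank-$2$ bi-vector fields, proving the first assertion of the corollary. For closed $M^4$, Theorem \ref{Mukherjee-1} instead produces from $\pi_{1/2}^{\mathrm{aux}}$ a homotopy of (possibly non-regular) bi-vector fields with integrable image, ending at a Poisson bi-vector field; after concatenation one obtains a homotopy of bi-vector fields from $\pi_0$ to a Poisson one, proving the second assertion.

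The only real obstacle is bookkeeping: the correspondence $\pi\leftrightarrow(\tau,\omega)$ is well-defined only on the open locus where $\pi\vert_\tau$ is non-degenerate (equivalently $\omega\vert_\tau\neq 0$), and one must verify that fiberwise inversion is smooth in the relevant parameters so that a homotopy on one side yields a homotopy on the other. This is immediate from the openness of the non-degeneracy condition and the smoothness of matrix inversion, so no argument beyond Theorem \ref{Main} and the cited h-principles is required; all the substantive work is carried out in Theorem \ref{Main}.
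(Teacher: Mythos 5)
Your proposal is correct and follows exactly the route the paper intends: the paper offers no written proof beyond the remark that the corollary follows by ``using \ref{Main} in \ref{Fernandes} and \ref{Mukherjee}'', and your argument is precisely that combination, with the bivector field/$(\tau,\omega)$-pair dictionary and the concatenation of homotopies spelled out. No gaps.
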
 

\section{Plan of the proof of \ref{Main}} We closely follow \cite{Mitsumatsu} in order to prove \ref{Main}. We state the following proposition.

\begin{proposition}
\label{Key}
Let $(\tau_0,\omega_0)\in \bar{\Delta}_{n-2}(\mathbb{R}^{n})$ then $(\tau_0,\omega_0)$ is homotopic through $(\tau_t,\omega_t)\in \bar{\Delta}_{n-2}(\mathbb{R}^{n}),\ t\in [0,1]$ such that $\tau_1$ is integrable in a neighborhood of $[-1,1]^n$ and $(\tau_t,\omega_t)$ is constant in a neighborhood of $\mathbb{R}^n-(-2,2)^n$.
\end{proposition}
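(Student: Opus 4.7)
The strategy combines Mitsumatsu's proof of Thurston's theorem \ref{Thurston} (to homotope the 2-distribution part) with a transport of $\omega_0$ along the resulting homotopy by a smooth family of bundle automorphisms of $T\mathbb{R}^n$ that carries $\tau_0$ onto $\tau_t$; the non-degeneracy condition $(\tau_t,\omega_t)\in\bar\Delta_{n-2}(\mathbb{R}^n)$ will then be preserved automatically. Fix an open set $U$ with $\overline{(-2,2)^n}\subset U$, for instance $U=(-2-\varepsilon,2+\varepsilon)^n$, and apply Thurston's theorem to $\tau_0|_U$ to obtain a smooth homotopy $\sigma_t$ of $2$-plane fields on $U$ with $\sigma_0=\tau_0|_U$ and $\sigma_1$ integrable throughout $U$. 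Pick a smooth bump function $\rho:\mathbb{R}^n\to[0,1]$ equal to $1$ on a neighbourhood of $[-1,1]^n$ and supported inside $(-2,2)^n$. Setting
\[
\tau_t(x)=\sigma_{\rho(x)t}(x)\ \text{for}\ x\in U,\qquad \tau_t(x)=\tau_0(x)\ \text{for}\ x\notin U,
\]
glues to a smooth homotopy on $\mathbb{R}^n$ that is constant equal to $\tau_0$ on $\mathbb{R}^n\setminus\mathrm{supp}\,\rho$ (hence on a neighbourhood of $\mathbb{R}^n\setminus(-2,2)^n$), and whose time-$1$ member is integrable on $\{\rho=1\}$, a neighbourhood of $[-1,1]^n$.

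Next I would construct a smooth family $\phi_t$ of bundle automorphisms of $T\mathbb{R}^n$ covering the identity, with $\phi_0=\mathrm{id}$, $\phi_t(\tau_0(x))=\tau_t(x)$, and $\phi_t(x)=\mathrm{id}$ whenever $x\notin(-2,2)^n$. Using the standard trivialization $T\mathbb{R}^n\cong\mathbb{R}^n\times\mathbb{R}^n$, the projection $GL(n)\to Gr_2(\mathbb{R}^n)$, $g\mapsto g\cdot\mathrm{span}(e_1,e_2)$, is a principal bundle whose fibre is the stabilizer of $\mathrm{span}(e_1,e_2)$. Because $[0,1]\times\mathbb{R}^n$ is contractible, every principal bundle over it is trivial, so a smooth lift $(t,x)\mapsto g_{t,x}\in GL(n)$ of $(t,x)\mapsto\tau_t(x)$ exists; setting $\phi_t(x):=g_{t,x}\,g_{0,x}^{-1}$ yields the desired family, and one may take the lift stationary where $\tau_t=\tau_0$. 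Defining
\[
\omega_t(x)(v,w):=\omega_0(x)\bigl(\phi_t(x)^{-1}v,\ \phi_t(x)^{-1}w\bigr),
\]
one obtains a smooth $2$-form agreeing with $\omega_0$ outside $(-2,2)^n$; for $v,w\in\tau_t(x)$, writing $v=\phi_t v_0$, $w=\phi_t w_0$ with $v_0,w_0\in\tau_0(x)$ gives $\omega_t(v,w)=\omega_0(v_0,w_0)$, so $\omega_t|_{\tau_t}$ is non-degenerate exactly when $\omega_0|_{\tau_0}$ is.

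The main obstacle is the explicit, smooth, and compactly supported construction of $\phi_t$. Abstract path-lifting in the fibration $GL(n)\to Gr_2(\mathbb{R}^n)$ over the contractible base $[0,1]\times\mathbb{R}^n$ gives existence, but producing a lift that is literally the identity outside $(-2,2)^n$ is best done by hand: subdivide $[0,1]$ finely enough that on each sub-interval $\tau_t$ stays in the Grassmannian neighbourhood of the distribution at the left endpoint where the Euclidean orthogonal complement gives a canonical graph presentation $\tau_t=\mathrm{graph}(A_t:\tau_0\to\tau_0^{\perp})$. The elementary map $v+w\mapsto v+A_t v+w$ (for $v\in\tau_0$, $w\in\tau_0^{\perp}$) provides an explicit local $\phi_t$, and composing across sub-intervals preserves both smoothness and the support condition, since $A_t$ itself is supported inside $\mathrm{supp}\,\rho\subset(-2,2)^n$.
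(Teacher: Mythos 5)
Your argument is essentially correct, but it is a genuinely different — and much more economical — route than the paper's. The paper proves Proposition \ref{Key} by re-running the whole Mitsumatsu--Vogt proof of Thurston's theorem while carrying the $2$-form through every stage: jiggling/general position, the civilization procedure (where $\tau$ and $\omega$ are made simultaneously constant on fibers of tubular neighbourhoods by the same radial squeezing), and the hole-filling step, where an explicit model foliation \emph{and} an explicit compatible $2$-form are built on $D^2\times S^1\times D^{n-3}$ and propagated through concatenation and the perfectness results for $Diff_c\mathbb{R}^{n-2}$. You instead take the foliation-only statement (the local, compactly supported form of Thurston's theorem, which is exactly Mitsumatsu--Vogt's key proposition; your cutoff $\rho$ recovers it cleanly from the global statement) as a black box, and observe that the $2$-form datum is soft: since the statement of \ref{Key} asks only that $\omega_t|_{\tau_t}$ be nowhere degenerate — an open, pointwise, non-differential condition, with no closedness imposed on $\omega_1$ — the form can be transported along any compactly supported homotopy of the plane field by a family of bundle automorphisms $\phi_t$ with $\phi_t(\tau_0)=\tau_t$, and non-degeneracy is preserved tautologically. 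What the paper's approach buys is explicit control of $(\tau_t,\omega_t)$ at every stage (useful if one later wants normal forms near the skeleton); what yours buys is a proof of \ref{Key}, hence of Theorem \ref{Main}, that is a short corollary of the known plane-field result.

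Two points need slightly more care than you give them. First, the abstract principal-bundle lifting paragraph does not by itself produce a lift that is the identity outside $(-2,2)^n$; but your concrete graph construction does, since $A_t(x)=0$ wherever $\tau_s(x)=\tau_{t_i}(x)$, so keep that as the actual argument and drop the abstract one. Second, the composite $\phi_t=\psi^i_t\circ\psi^{i-1}_{t_i}\circ\cdots$ is continuous but not obviously smooth in $t$ at the subdivision points $t_i$; this is repaired by the standard trick of reparametrising the homotopy $\tau_t$ so that it is stationary in $t$ near each $t_i$ before lifting. With these two repairs the proof is complete.
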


\begin{proof}
{\bf(Proof of \ref{Main})} Let $\{h_i:U_i\to \mathbb{R}^n\}_{i\in \mathbb{N}}$ be a countable atlas with the property that $\{h_i^{-1}((-1,1)^n)\}_{i\in \mathbb{N}}$ covers $M$. Define \[(\tau'_0,\omega'_0)=((h_1)_*(\tau_0)_{\mid U},(h_1)_*\omega_0)\] Using \ref{Key} for $(\tau'_0,\omega'_0)$ we get $(\tau'_t,\omega'_t)\in \bar{\Delta}_{n-2}(\mathbb{R}^n)$ such that $\tau'_1$ is integrable in a neighborhood of $[-1,1]^n$ and $(\tau'_t,\omega'_t)$ is constant on a neighborhood of $\mathbb{R}^n-(-2,2)^n$.\\

$h_1^*\tau'_1$ can be extended by $\tau$ to a plane field $\tau_1$ and do the same for $h_1^*\omega'_1$ and let us call it $\omega_1$.\\

Continue by replacing $\tau_0,\ \omega_0\ and\ h_1$ by $\tau_1,\omega_1\ and\ h_2$. If $x\in \cup_{i=1}^N h_i^{-1}((-1,1)^n)$, then there is a neighborhood $V$ of $x$ such that all homotopies after $N$ steps are constant and if $V$ is small enough, all homotopies from step $1$ to $N$ are constant on $V$ and hence the sequence $(\tau_0,\omega_0), (\tau_1,\omega_1),...$ converges.  
\end{proof}

So the proof of \ref{Key} is the main task. It shall be done in the following three steps namely 

\begin{enumerate}
\item Triangulate $\mathbb{R}^n$ so that $\tau_0$ is in general position on a neighborhood of $[-1,1]^n$.\\
\item Deform $(\tau_0,\omega_0)$ into one which is civilized in near $[-1,1]^n$ with respect to the triangulation in the first step.\\
\item Filling the holes.\\
\end{enumerate}
 
 These steps will be covered in the following sections bellow.\\
 
 \section{Triangulation in general position} This part is exactly same as \cite{Mitsumatsu}, we just outline the main definitions and results for completion.
 
 \begin{definition}
 \label{General Position}
 For a $k$-plane field $\tau$ on $\mathbb{R}^n$, an $n$-simplex $\sigma$ is in general position with respect to $\tau$ if for all $x\in \sigma$ the orthogonal projection along $\tau(x)$ from the tangent plane of every $(n-k)$-face to $\tau(x)^{\perp}$ is injective. A triangulation is said to be in general position with respect to $\tau$ in a neighborhood of a closed set if every $n$-simplex of this triangulation intersecting the closed set is in general position.
 \end{definition}
 
 \begin{definition}
 \label{jiggling}
 An affine triangulation $T'$ is said to be an $\varepsilon$-jiggling of a given affine triangulation $T$ for a given $\varepsilon>0$ if there is a simplicial isomorphism $T \stackrel{\phi}\to T'$ such that \[|\phi(v)-v|<\varepsilon\ for\ every\ vertex\ v\in T\]
 \end{definition}
 
 \begin{proposition}(\cite{Thurston})
 \label{Thurston} 
 For a given $C^1$ $k$-plane field $\tau$ on $\mathbb{R}^n$ with $1\leq k \leq n-1$, a compact set $K\subset \mathbb{R}^n$ and $\varepsilon>0$ there exists an $L\in \mathbb{N}$ such that whenever $l \geq L$ there exists an $\varepsilon$-jiggling of the triangulation associated to the cubical lattice $(\frac{1}{l}\mathbb{Z})^n$ which is in general position with respective to $\tau$ near $K$.
 \end{proposition}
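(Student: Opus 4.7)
The plan is to execute Thurston's original probabilistic jiggling argument. First, I would take a compact set $K' \supset K$ containing the $\varepsilon$-neighborhood of $K$, and use the $C^1$-uniform continuity of $\tau$ on $K'$ to fix a modulus: for any angle $\delta > 0$, there exists $\eta = \eta(\delta) \asymp \delta$ such that $|x - y| < \eta$ implies $d_{\mathrm{Gr}}(\tau(x), \tau(y)) < \delta$ in the Grassmannian.

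Next, I would introduce a quantitative refinement of Definition \ref{General Position}: say an $n$-simplex $\sigma$ is in \emph{$\delta$-general position} with respect to a constant $k$-plane $\tau_0$ if every $(n-k)$-face $F \subset \sigma$ has affine span making principal angle at least $\delta$ with $\tau_0$. This is an open and scale-invariant condition, and combined with the modulus above it forces ordinary general position with respect to $\tau(x)$ at every $x \in \sigma$, provided $\operatorname{diam}(\sigma) < \eta(\delta)$. Take $L$ so that for all $l \ge L$, every lattice $n$-simplex intersecting $K$ is contained in $K'$ and has diameter less than $\min(\eta(\delta), \varepsilon/2)$. Let $V$ denote the finite vertex set of these simplices, and encode a candidate jiggling as a point $\phi \in \prod_{v \in V} B(v, \varepsilon)$, extended by the identity on all other vertices.

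The heart of the argument is a measure estimate on the product ball $\prod_{v} B(v, \varepsilon)$: for each pair $(\sigma, F)$ of an intersecting simplex and an $(n-k)$-face, the set of $\phi$ for which $\phi(F)$ fails $\delta$-general position against $\tau(x_\sigma)$ at a fixed reference point $x_\sigma \in \sigma$ is a semi-algebraic subset with controlled relative measure; since all lattice simplices near $K$ fall into one of $n!$ congruence classes of model simplices, the per-pair bound is uniform. Summing over the finitely many such $(\sigma,F)$ and choosing $\delta$ small enough, the total bad set has measure strictly less than that of the full product, so a good $\phi$ exists and provides the required $\varepsilon$-jiggling in the sense of Definition \ref{jiggling}.

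The main obstacle is coordinating $l$, $\delta$, and $\varepsilon$ so that the per-pair measure estimate remains summable over the $O(l^n)$ relevant simplices while the modulus $\eta(\delta)$ still exceeds the simplex diameter $1/l$. The $C^1$ hypothesis enters precisely here: it lets $\eta(\delta) \asymp \delta$ be taken linear, and the key quantitative observation is that a jiggling of magnitude $\varepsilon$ at lattice scale $1/l$ induces face-angle fluctuations of order $l\varepsilon$, giving angular freedom that grows with $l$ and compensates for the simplex count. Carrying out the explicit measure bound on each of the $n!$ model simplex types via a short determinant calculation constitutes the technical core of the lemma.
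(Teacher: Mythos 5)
This proposition is not proved in the paper at all: it is quoted from Thurston's 1974 paper, and the author explicitly defers the whole section to Mitsumatsu--Vogt. So the comparison is really with the classical jiggling argument, and your probabilistic reformulation, while attractive, has two concrete gaps as written.

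First, your configuration space $\prod_{v\in V}B(v,\varepsilon)$ with $\varepsilon$ fixed and $l\to\infty$ consists almost entirely of maps that are not jigglings: once $\varepsilon\gg 1/l$, independently displacing vertices by up to $\varepsilon$ degenerates and overlaps simplices, so $T'$ is no longer an affine triangulation in the sense of Definition \ref{jiggling}. To preserve the simplicial structure the displacements must be at most $c/l$ for a dimensional constant $c$ (the thickness of the $n!$ model simplices). Consequently your ``key quantitative observation'' --- that the face-angle fluctuation is of order $l\varepsilon$ and hence the angular freedom grows with $l$ --- is an artifact of working on an inadmissible configuration space. With admissible perturbations the problem is scale-invariant and the angular freedom per simplex is an $l$-independent constant.

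Second, once the per-simplex freedom is scale-invariant, the plain union bound cannot close. The bad set for a fixed pair $(\sigma,F)$ (failure of $\delta$-general position) is a $\delta$-neighborhood of a codimension-one degeneracy locus, so its relative measure is of order $\delta$ uniformly over the $n!$ congruence classes, but it is bounded \emph{below} independently of $l$; meanwhile the number of simplices meeting $K$ is of order $l^{n}$, and you also need $\delta\gtrsim 1/l$ so that $\delta$-general position at the reference point $x_\sigma$ dominates the $C^1$ oscillation of $\tau$ over $\sigma$. The sum of bad measures is then of order $l^{n}\delta\gtrsim l^{n-1}$, which exceeds the total measure for large $l$. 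The argument can be rescued, but not by the first-moment method: either invoke the Lov\'asz Local Lemma (each event $B_\sigma$ depends only on the vertices of $\sigma$, so the dependency degree is a constant depending only on $n$, and $e\,p(\delta)(d+1)\le 1$ is achievable for small $\delta$ and large $l$), or follow Thurston and exploit the periodicity and self-similarity of the lattice triangulation to choose the jiggling periodically modulo a sublattice, reducing the verification to a fundamental domain together with a compactness argument in the Grassmannian. As written, the proposal stops short of either device, and the step ``the total bad set has measure strictly less than that of the full product'' is exactly where it fails.
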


\section{Civilization} Let $T$ be an $\varepsilon$-jiggling of the standard triangulation associated to the lattice $(\frac{1}{l}\mathbb{Z})^n$ which is in general position with respect to $\tau$ in a neighborhood of $[-2,2]^n$. Take $l$ large enough so that the following holds. \\

\begin{enumerate}
\item[{\bf(A)}] If $x$ is a vertex of $T$ with $\bar{st}(x,T)\cap [-1,1]^n$ being non-empty then $\bar{St}(x,T)\subset [-\frac{3}{2},\frac{3}{2}]^n$.\\
\item[{\bf(B)}] If $\sigma$-is a $n$-simplex of $T$ which intersects $[-2,2]^n$ then for any $x,y\in \sigma$ the plane field $\tau(y)$ is the graph of a linear map $L_{xy}:\tau(x)\to \tau(x)^{\perp}$ of norm less that one.
\end{enumerate}

Let $T_1$ be the union of all simplices which are faces of $n$-simplices of $T$ intersecting $[-1,1]^n$.\\

We shall deform $(\tau,\omega)$ into one whose plane field is integrable in a neighborhood of the $(n-1)$-skeleton of $T_1$. The deformation will be done through $(\tau_t,\omega_t)\in \bar{\Delta}_{n-2}(\mathbb{R}^n)$ such that $T$ is in general position with respect to $\tau_t$ near $[-2,2]^n$ and which satisfies {\bf(B)}.\\

We first give the definition of civilized pair $(\tau,\omega)$.

\begin{definition}
\label{Civilization-Def}
Let $0\leq j \leq n-2$. We say $(\tau_j,\omega_j)$ civilized on the $j$-skeleton of $T_1$ if $T_1$ is in general position with respect to $\tau_j$ near $[-2,2]^n$ and there are real numbers $\delta_0>...>\delta_j>0\ and\ \eta_0>...>\eta_j>0$ satisfying
 
 \begin{enumerate}
 \item[{\bf(C)}] Let $\sigma$ be an $i$-simplex of $T_1$ with $0\leq i\leq j$. For $x\in \sigma$ consider the planes $x+\tau_j(x)$ and $x+E_x$, where $E_x=(\tau_j(x)+T\sigma)^{\perp}$. Let $B_x(\delta)$ and $E_x(\eta)$ be the closed $\delta$ and $\eta$ neighborhoods of $x$ in the planes $x+\tau_j(x)$ and $x+E_x$ respectively. From \ref{General Position} it follows that the disk $B_x(\delta)\times E_x(\eta)$ has dimension $(n-i)$. Then the disk $B_x(\delta_i)\times E_x(\eta_i)$ is the fiber of a tubular neighborhood $N(\sigma)$ of $\sigma$ in $\mathbb{R}^n$ moreover any $(n-2)$-simplex of $T_1$ which has $\sigma$ as a face intersects the boundary of the disk $B_x(\delta_i)\times E_x(\eta_i)$ in a subset of $int(B_x(\delta_i))\times \partial(E_x(\eta_i))$.\\
 
 \item[{\bf(D)}] On the fibers $B_x(\delta_i)\times E_x(\eta_i)$, $(\tau_j,\omega_j)=(\tau_j(x),\omega_j(x))$.\\
 
 \item[{\bf(E)}] For two simplices $\sigma,\sigma'$ of $T_1$ of dimension less or equal to $j$, we must have $N(\sigma)\cap N(\sigma')\subset N(\sigma \cap \sigma')$. If $\sigma'$ is a proper face of $\sigma$ and $N(\sigma)\cap N(\sigma')$ non-empty so that $(y_1,y_2)\in B_x(\delta_i)\times E_x(\eta_i)$ lies in $B_v(\delta_{i'})\times \{y'_2\}$ with $v\in \sigma'$ and $y'_2\in E_v(\eta_{i'})$ with $i'=dim\sigma'$, then we must have $B_x(\delta_i)\times \{y_2\}\subset int(B_v(\delta_{i'}))\times \{y'_2\}$. We must also have $N(\sigma)\cap \sigma''$ be empty, where $\sigma''$ is of dimension at least $j+1$ for which $\sigma$ is not a face.\\
 
 \end{enumerate}
 
 For the case $j=n-1$ the conditions {\bf(C)}, {\bf(D)} and {\bf(E)} will have to be replaced by {\bf(C')}, {\bf(D')} and {\bf(E')} respectively.
 
 \begin{enumerate}
 \item[{\bf(C')}] For a $(n-1)$-simplex $\sigma$ of $T_1$ let $F_x(\delta)$ be the closed $\delta$-neighborhood of $x$ in the line $x+F_x$, where $F_x$ is the the orthogonal complement of $\tau_{n-1}(x)\cap T\sigma$ in $\tau_{n-1}(x)$. Then $F_x(\delta_{n-1})$ is the fiber of a tubular neighborhood $N(\sigma)$ of $\sigma$ in $\mathbb{R}^n$.\\
 
 \item[{\bf(D')}] If $x\in \sigma$, where $\sigma$ is a $(n-1)$-simplex of $T_1$. Then on $F_x(\delta_{n-1})$ the pair $(\tau_{n-1},\omega_{n-1})$ is equal to $(\tau_{n-1}(x),\omega_{n-1}(x))$.\\
 
 \item[{\bf(E')}] {\bf(E)} holds and if $\sigma,\sigma'$ be $(n-1)$-simplices then we must have $N(\sigma)\cap N(\sigma')\subset N(\sigma\cap\sigma')$. If $\sigma''$ is a proper face of $\sigma$ so that $F_x(\delta_{n-1})$ intersects $N(\sigma')$, say $y\in F_x(\delta_{n-1})$ also belongs to $B_v(\delta_{i'})\times \{y'_2\}$ with $v\in \sigma'$ and $y'\in E_v(\eta_{i'})$, $i'=dim\sigma'$, then $F_x(\delta_{n-1})\subset int(B_v(\delta_{i'}))\times \{y'\}$. Moreover $N(\sigma)\cap \sigma'''$ is empty for any $n$-simplex $\sigma'''$ for which $\sigma$ is not a face.
 \end{enumerate}
\end{definition}

Let us set that for any $2$-plane field for which $T$ is in general position near $[-2,2]^n$ to be civilized on the $-1$-skeleton of $T_1$ and $\delta_{-1},\eta_{-1}=\infty$.

\begin{proposition}
\label{Civilization-Prop}
Let $-1\leq p\leq n-2$ and let $(\tau_{p-1},\omega_{p-1})\in \bar{\Delta}_{n-2}(\mathbb{R}^n)$ which is civilized on the $(p-1)$-skeleton of $T_1$ and let $\delta_0>...>\delta_{p-1}>0$ and $\eta_0>...>\eta_{p-1}>0$ be the associated real numbers. Then there exists \[(\tau_p,\omega_p)\in \bar{\Delta}_{n-2}(\mathbb{R}^n),\ 0<\delta_p<\delta_{p-1}\ and\ 0<\eta_p<\eta_{p-1}\] such that properties {\bf(C)} to {\bf(E)} holds for $j=p$ (for $j=p=n-1$ {\bf(C)} to {\bf(E)} has to be replaced by {\bf(C')} to {\bf(E')}) and $\delta_i,\ \eta_i,\ i=0,...,p$. Moreover $(\tau_p,\omega_p)$ is homotopic to $(\tau_{p-1},\omega_{p-1})$ in $\bar{\Delta}_{n-2}(\mathbb{R}^n)$ through a homotopy for which $T$ remains in general position near $[-2,2]^n$ and which satisfies {\bf(B)} through out the homotopy.
\end{proposition}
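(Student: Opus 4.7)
The plan is to build $(\tau_p, \omega_p)$ by a local freezing construction on each $p$-simplex of $T_1$, carried out on all $p$-simplices simultaneously and matched to the fiber structure already in place on the $(p-1)$-skeleton. I treat $0 \leq p \leq n-2$ first; the case $p = n-1$ is handled by the same argument with the codimension-$1$ modifications {\bf(C')}--{\bf(E')}.

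\textbf{Step 1 (choice of radii and combinatorial fit).} For each $p$-simplex $\sigma$ of $T_1$ meeting $[-2,2]^n$ and each $x \in \sigma$, general position provides an $(n-p)$-dimensional transversal disk $B_x(\delta) \times E_x(\eta)$ complementary to $T\sigma$. By compactness of the relevant portion of $T_1$, I pick $\delta_p < \delta_{p-1}$ and $\eta_p < \eta_{p-1}$ small enough that (i) the disks $B_x(\delta_p) \times E_x(\eta_p)$ assemble into a genuine tubular neighborhood $N(\sigma)$ of $\sigma$; (ii) $N(\sigma) \cap N(\sigma') \subset N(\sigma \cap \sigma')$ for any two simplices $\sigma, \sigma'$ of dimension $\leq p$; (iii) on overlaps with the neighborhood $N(\sigma')$ of a proper face $\sigma'$, the nesting clause of {\bf(E)} holds; and (iv) $N(\sigma)$ is disjoint from every simplex of dimension $\geq p+1$ not having $\sigma$ as a face. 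These are finitely many open conditions in $\delta_p, \eta_p$, each satisfiable by general position, so all can be arranged at once.

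\textbf{Step 2 (freezing inside $N(\sigma)$).} I choose a slightly larger concentric neighborhood $\widetilde{N}(\sigma) \supset N(\sigma)$ as the support of the homotopy. On $N(\sigma)$ the deformed pair is declared constant along fibers, with value $(\tau_{p-1}(x), \omega_{p-1}(x))$ on the fiber over $x \in \sigma$. Condition {\bf(B)} writes $\tau_{p-1}(y)$ as the graph of a linear map $L_{xy} \colon \tau_{p-1}(x) \to \tau_{p-1}(x)^{\perp}$ of norm $<1$; a smooth cut-off supported on $\widetilde{N}(\sigma) \setminus N(\sigma)$ scales $L_{xy}$ between $0$ (on $N(\sigma)$) and $1$ (outside $\widetilde{N}(\sigma)$), and an analogous cut-off linearly interpolates $\omega_{p-1}(y)$ to $\omega_{p-1}(x)$. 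On any overlap with an existing $N(\sigma')$ for a proper face $\sigma'$, condition {\bf(D)} at level $p-1$ already makes $(\tau_{p-1}, \omega_{p-1})$ constant on the $\sigma'$-fibers, and the nesting clause of {\bf(E)} places each new $N(\sigma)$-fiber inside one such $\sigma'$-fiber; hence the new freezing prescription automatically coincides with the old one, and no redefinition is forced.

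\textbf{Step 3 (openness, and the boundary case $p = n-1$).} Because every deformation is supported in fibers of linear size $\delta_p, \eta_p$ and stays $C^0$-close to $(\tau_{p-1}, \omega_{p-1})$, non-degeneracy of $\omega$ on the $2$-plane field, general position of $T$ with respect to $\tau$ near $[-2,2]^n$, and the norm-$<1$ condition {\bf(B)} all remain valid along the homotopy once $\delta_p, \eta_p$ are chosen small enough --- these are open conditions. The step I expect to be the main obstacle is precisely the cross-dimensional combinatorial fitting of Step 1: condition {\bf(E)} must mesh with every lower-dimensional civilization already fixed, so a sizeable but finite system of open inequalities must be satisfied simultaneously; general position at each level turns each inequality into an open condition on the radii, so the system is consistent. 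For $p = n-1$ the only change is that the transversal is the $1$-disk $F_x(\delta_{n-1})$ of {\bf(C')}, and the same freezing-plus-interpolation argument yields {\bf(C')}--{\bf(E')} verbatim, completing the induction.
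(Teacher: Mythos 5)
Your proposal follows essentially the same route as the paper's proof: shrink $\delta_p,\eta_p$ so that the tubular neighborhoods of the $p$-simplices satisfy \textbf{(C)} and \textbf{(E)} and mesh with the already-civilized faces (where \textbf{(D)} at level $p-1$ makes the new freezing automatic), then freeze $(\tau_{p-1},\omega_{p-1})$ to its value at the center of each fiber and interpolate in a collar, invoking openness of non-degeneracy, general position and \textbf{(B)}. The only immaterial difference is the collar interpolation: you rescale the graph map $L_{xy}$ and linearly interpolate the forms (which needs your smallness/openness remark to keep $\omega|_{\tau}\neq 0$), whereas the paper evaluates the old pair at a radially rescaled point $f(|y|)y$, so pointwise membership in $\bar{\Delta}_{n-2}$ is preserved for free.
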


\begin{proof} We shall only deal with the case $0\leq j\leq n-2$. The $j=n-1$ case is similar.\\

Consider all $p$-simplices $\sigma$ of $T_1$. Let $\sigma'$ be a proper face of $\sigma$. Then $(\tau_{p-1},\omega_{p-1})$ is constant on $B_y(\delta_i)\times E_y(\eta_i)$, $i=dim\sigma'$ and $\sigma$ intersects $B_y(\delta_i)\times E_y(\eta_i)$ in $int(B_y(\delta_i))\times E_y(\eta_i)$ by {\bf(C)}. So if $x\in \sigma \cap B_y(\delta_i)\times E_y(\eta_i)$ then $\tau_{p-1}(x)=\tau_{p-1}(y)$ and $E_x\leq E_y$ where $E_x=(T\sigma+\tau_{p-1}(x))^{\perp}$. So we can find $\delta_p$ and $\eta_p$ so that {\bf(C)} and {\bf(E)} holds and {\bf(D)} holds for those \[x\in \sigma \cap N(\sigma')\] where $\sigma'$ a proper face of $\sigma$. The last condition of {\bf(C)} holds if $\frac{\eta_p}{\delta_p}$ is small.\\

So we need to deform $(\tau_{p-1},\omega_{p-1})$ so that {\bf(D)} holds for all $x\in \sigma$ and the deformation will be supported in the complement of $N(\sigma')$. \\

By {\bf(B)} and {\bf(E)}, for all $x\in \sigma$ and $z\in B_x(\delta_p)\times E_x(\eta_p)$ the plane $\tau_{p-1}(z)$ is the graph of a linear map $L_{zx}:\tau_{p-1}(x)\to \tau_{p-1}(x)^{\perp}$.\\

Choose $\bar{\delta}_p>\delta_p\ and\ \bar{\eta}_p>\eta_p$ such that $B_x(\bar{\delta}_p)\times E_x(\bar{\eta}_p)$ are still fibers of a tubular neighborhood $\bar{N}(\delta)$.\\

Now on $B_x(\delta_p)\times E_x(\eta_p)$ we set \[(\tau_p,\omega_p)=(\tau_{p-1}(x),\omega_{p-1}(x))\] Let the discs $B_x(\delta_p)\times E_x(\eta_p)$ and $B_x(\bar{\delta}_p)\times E_x(\bar{\eta}_p)$ have radii $r,\bar{r}$ respectively. Define a continuous map $f:[r,\bar{r}]\to [0,\bar{r}]$ such that $f(r)=0$ and $f(\bar{r})=\bar{r}$. Now for $y\in B_x(\bar{\delta}_p)\times E_x(\bar{\eta}_p)\cap B_x(\delta_p)\times E_x(\eta_p)$ set $(\tau_p(y),\omega_p(y))=(\tau_{p-1}(f(|y|)y),\omega_{p-1}(f(|y|)y))$.
\end{proof}

\section{Filling The Hole} Now we can assume that $(\tau_0,\omega_0)$ satisfies the desired properties of \ref{Key} on $N(\partial \sigma)$, where $\sigma$ is an $n$-simplex of $T_1$. A subset of $\sigma$ diffeomorphic to $B^2\times B^{n-2}$ containing the complement of $N(\partial \sigma)$ is called a hole.

\begin{proposition}
\label{Filling-1}
For each $n$-simplex $\sigma$ of $T_1$ there exists an embedding $\phi:D^2\times D^{n-2}\to int(\sigma)$ such that $(\tau',\omega')=(\phi^*\tau,\phi^*\omega)\in \bar{\Delta}_{n-2}(D^n)$ satisfying

\begin{enumerate}
\item Near $D^2\times \partial D^{n-2}$, $\tau'$ is the kernel of the projection to $D^{n-2}$ factor.\\
\item $\tau'$ is $\pitchfork$ to $\partial D^2\times D^{n-2}$ and in a neighborhood of $\partial D^2\times D^{n-2}$ is the pull back of the line field $\tau' \cap T(\partial D^2\times D^{n-2})$ by the projection $(D^2-\{0\})\times D^{n-2}\to \partial D^2\times D^{n-2}$. Furthermore the line field $\tau' \cap T(\partial D^2\times D^{n-2})$ is $\pitchfork$ to $\{x\}\times D^{n-2}$ for all $x\in D^2$.
\end{enumerate}
\end{proposition}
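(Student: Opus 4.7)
The plan is to build $\phi$ by matching cylindrical coordinates on $D^2\times D^{n-2}$ to the product structures that \ref{Civilization-Prop} has installed near $\partial\sigma$. After iterating civilization up to $p=n-1$, every proper face $\sigma'\subset\partial\sigma$ carries a tubular neighborhood $N(\sigma')$ on which $(\tau,\omega)$ is constant along prescribed fibers: a product $B_x(\delta_i)\times E_x(\eta_i)$ of a $D^2$ in $\tau(x)$ and a transverse $D^{n-2-i}$ for an $i$-face with $0\le i\le n-2$, and a $1$-disk $F_x(\delta_{n-1})\subset\tau(x)$ transverse to $\sigma'$ for an $(n-1)$-face. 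The guiding idea is that the $D^2$-factor of $D^2\times D^{n-2}$ should be aligned with $\tau$ wherever $\tau$ is well-controlled, while the $D^{n-2}$-factor records the transverse direction.

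Concretely, I would first fix a reference vertex $v_0\in\sigma$ and use the splitting $T_{v_0}\R^n=\tau(v_0)\oplus\tau(v_0)^\perp$ to identify $B_{v_0}(\delta_0)\times E_{v_0}(\eta_0)$ with a standard $D^2\times D^{n-2}$. The graph hypothesis \textbf{(B)} together with the nesting conditions \textbf{(E)}--\textbf{(E')} let this splitting propagate consistently across $\sigma$, so that every fiber $B_x(\delta_i)\times E_x(\eta_i)$ sits in the global picture with its $B_x$-factor parallel to the $D^2$-slice direction. Define $\phi$ so that, for $y$ in a collar of $\partial D^{n-2}$, the disk $\phi(D^2\times\{y\})$ is one of the integral $B_x(\delta_i)\times\{e\}$-disks coming from an $i$-face with $i\le n-2$; property (1) is then automatic from \textbf{(D)}, since $\tau$ is tangent to such a disk. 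For property (2), near an $(n-1)$-face $\sigma''$ I would parametrize $\phi$ so that the radial direction of $D^2$ lands in the transverse $1$-disk $F_x(\delta_{n-1})$, while the angular coordinate in $\partial D^2$ together with the $D^{n-2}$-factor moves inside $\sigma''$. Then $\phi^\ast\tau$ is spanned by $\partial_r$ and the pulled-back line field $\phi^\ast(\tau\cap T\sigma'')$, exactly the pullback form demanded. The transversality of this line field to each slice $\{x\}\times D^{n-2}$ is forced by the general-position assumption \ref{General Position}, which prevents $\tau$ from becoming tangent to any face along the orthogonal projection.

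The main difficulty is gluing the ``top/bottom'' construction (from faces of dimension $\le n-2$) to the ``side'' construction (from $(n-1)$-faces) consistently at the corner $\partial D^2\times\partial D^{n-2}$. This is precisely the role of compatibility condition \textbf{(E')}: when an $(n-1)$-face $\sigma''$ contains a lower-dimensional face $\sigma'$, the thin fiber $F_x(\delta_{n-1})$ is forced to sit inside the fatter product $B_v(\delta_{i'})\times E_v(\eta_{i'})$. By shrinking $\delta_{n-1},\eta_{n-1}$ sufficiently at the civilization stage, the two product descriptions of $\phi$ agree on their overlap, and $\phi$ assembles into an honest embedding rather than merely an immersion. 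Once the boundary behavior is fixed, the extension of $\phi$ to the interior of $D^2\times D^{n-2}$ is elementary, and one chooses the image to contain $\sigma\setminus N(\partial\sigma)$ so that it is indeed a hole in the sense defined before the proposition.
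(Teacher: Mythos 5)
Your outline follows the same approach as the paper, whose entire proof of this proposition is a citation to (4.1)--(4.3) of Mitsumatsu--Vogt: the embedding $\phi$ is assembled from the product structure installed by civilization, with the $D^2$-factor aligned with $\tau$, the side $\partial D^2\times D^{n-2}$ placed along the $(n-1)$-faces where $\tau$ is a suspension, and the corner matching governed by \textbf{(E)}/\textbf{(E')}. The one point the paper does add, and which you should state explicitly since it is the only assertion involving $\omega$, is that $(\phi^*\tau,\phi^*\omega)$ automatically lies in $\bar{\Delta}_{n-2}(D^n)$ because $\phi$ is an embedding and the fiberwise non-degeneracy of $\omega$ on $\tau$ is preserved under pullback by diffeomorphisms.
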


\begin{proof}
The proof is same as (4.1)-(4.3) in \cite{Mitsumatsu}. Only one needs to observe that the non-degeneracy condition is preserved by pull back under diffeomorphisms.
\end{proof}

\subsection{Another Pair} For a $k$-manifold $M$ a foliated $M$-product over the circle $S^1$ is a co-dimension $k$ foliation on $S^1\times M$ which is $\pitchfork$ to the second factor. A foliated $M$-product over $S^1$ is said to have compact support if there exists a compact set $C\subset M$ such that on $S^1\times (M-C)$ the foliation is given by the projection onto $M-C$.\\

Let $\xi$ be a foliated $M$-product with compact support and let $\alpha \in \Omega^1(S^1\times M)$. Consider \[(\xi,\alpha)\in Fol_k(S^1\times M)\times \Omega^1(S^1\times M)\] satisfying $\alpha_{\mid T\xi}\neq 0$.  By \ref{Filling-1} $\tau'$ defines a foliated $\mathbb{R}^{n-2}$-product on $S^1\times \mathbb{R}^{n-2}$. Here we identify $int(D)^{n-2}$-product over $S^1$ with $\mathbb{R}^{n-2}$-product over $S^1$.\\

Define the vector field $X:=\frac{\tau'_{\mid S^1\times D^{n-2}}}{T\xi'}$, where $\xi'$ is the foliated $\mathbb{R}^{n-2}$ structure as identified with $\tau'$ by \ref{Filling-1}. Then define $\alpha'=\omega'_{\mid S^1\times D^{n-2}}(X,-)$. Observe that the pair $(\xi',\alpha')$ satisfies the non-degeneracy condition $\alpha'_{\mid T\xi'}\neq 0$. From now on whenever we write the pair $(\xi,\alpha)$ we mean that it satisfies this non-degeneracy condition.\\

Thus we need to find a pair $(\tau'',\omega'')\in \bar{\Delta}_{n-2}(D^2\times \mathbb{R}^{n-2})$ with $\tau''=T\mathcal{F}''$ integrable satisfying 

\begin{enumerate}
\item[(*)]  Outside some compact set $D^2\times C$, $\tau''$ is given by the projection onto $\mathbb{R}^{n-2}$.\\
\item[(**)] $\tau''\pitchfork rS^1\times \mathbb{R}^{n-2}$ for $r$ close to $1$ and induces there the given pair $(\xi',\alpha')$.\\
\item[(***)] $(\tau'',\omega'')$ is homotopic in $\bar{\Delta}_{n-2}(S^1\times \mathbb{R}^{n-2})$ to an one for which the $2$-plane field is $\pitchfork$ to the $\mathbb{R}^{n-2}$-factor by a homotopy constant near $S^1\times \mathbb{R}^{n-2}$. \\
\end{enumerate}

Now as in \cite{Mitsumatsu} we identify $\xi'$ with a periodic path $\gamma_{\xi'}:\mathbb{R}\to Diff_c\mathbb{R}^{n-2}$. 

\begin{definition}
\label{Fillability}
We call a pair $(\xi,\alpha)$ or equivalently $(\gamma_{\xi},\alpha)$ fillable if there there exists a pair $(\tau=T\mathcal{F},\omega)$ inducing $(\xi,\alpha)$ on $S^1\times \mathbb{R}^{n-2}$ and which satisfies (*) and (**). 
\end{definition}

\begin{definition}
\label{Horizontal-Adjusted}
A path $\gamma:[0,1]\to Diff_c\mathbb{R}^{n-2}$ starting at identity $id$ is called horizontal on an interval $J\subset [0,1]$ if it is constant on $J$. A path which is horizontal near the starting and end is called adjusted.
\end{definition}

{\bf Concatenation:} Let $(\gamma_i,\alpha_i),\ i=1,2$ be two adjusted pairs. Define the immersion \[\{(exp(2\pi it),\gamma_1(2t,y)):y\in \mathbb{R}^{n-2},\ t\in [0,1/2]\}\stackrel{\phi_1}{\to}\{(exp(2\pi it),\gamma_1(t,y)):y\in \mathbb{R}^{n-2}, t\in[0,1]\}\] by $\phi_1(exp(2\pi it),\gamma_1(2t,y))=(exp(2\pi i.2t),\gamma_1(2t,y))$. Set $\bar{\alpha}_1=\phi^*\alpha_1$.\\

 Similarly define \[\{(exp(2\pi it),\gamma_2(2t-1,\gamma_1(1,y))):y\in \mathbb{R}^{n-2},\ t\in [1/2,1]\}\] \[\stackrel{\phi_2}{\to}\{(exp(2\pi it),\gamma_2(t,\gamma_1(1,y))):y\in \mathbb{R}^{n-2}, t\in[0,1]\}\] by $\phi_2(exp(2\pi it),\gamma_2(2t-1,\gamma_1(1,y)))=(exp(2\pi i(2t-1)),\gamma_2(2t-1,\gamma_1(1,y)))$. Set $\bar{\alpha}_2=\phi^*\alpha_2$. \\
 
 The new pairs still satisfies the non-degeneracy condition on the respective domains. Now define the concatenation $(\gamma_1,\alpha_1)*(\gamma_2,\alpha_2)=(\gamma,\alpha)$ as follows. The concatenation of the paths namely $\gamma$ is given by $\gamma(t)=\gamma_1(2t),\ t\in [0,1/2]$ and $\gamma(t)=\gamma_2(2t-1)\circ \gamma_1(1),\ t\in [1/2,1]$. From the fact that $\gamma_i$'s being adjusted it follows that $\bar{\alpha}_i$'s match nicely and we get $\alpha$.\\

\begin{lemma}
\label{Concatenation}
The concatenation of two fillable adjusted pair $(\gamma_1,\alpha_1)$ and $(\gamma_2,\alpha_2)$ is fillable.
\end{lemma}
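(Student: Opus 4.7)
The plan is to construct a filling of the concatenation by gluing together the two given fillings along a chord, using the adjusted condition to ensure that there is a product collar at the gluing site.

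First I would normalize each filling $(\mathcal{F}_i,\omega_i)$ of $(\gamma_i,\alpha_i)$ on $D^2\times\mathbb{R}^{n-2}$ near the basepoint arc of $\partial D^2$. Because $\gamma_i$ is adjusted, it is constant (equal to $\mathrm{id}$) on $[0,\varepsilon]$ and constant (equal to $\gamma_i(1)$) on $[1-\varepsilon,1]$, so on the corresponding closed boundary arcs $A_i^0,A_i^1\subset\partial D^2$ the foliation $\xi_i$ is the trivial product foliation, with leaves $A_i^0\times\{y\}$ and $A_i^1\times\{\gamma_i(1)(y)\}$. The $2$-dimensional leaves of $\mathcal{F}_i$, transverse to the boundary by $(**)$, extend inward from these horizontal arcs as graphs $\{(x,f_y(x))\}$ over a fixed half-disk collar $U_i\subset D^2$ of $A_i^0\cup A_i^1$, depending smoothly on $y$ with $f_y|_{\partial D^2}=y$. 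The diffeomorphism of $U_i\times\mathbb{R}^{n-2}$ sending $(x,y)\mapsto(x,f_y(x))$ is the identity on $\partial D^2\times\mathbb{R}^{n-2}$, and its pullback converts the leaves into $U_i\times\{y\}$; a subsequent fibrewise Moser-type diffeomorphism of the form $(x,y)\mapsto(\phi_y(x),y)$, identity on the boundary, further normalizes $\omega_i$ to a $y$-independent non-degenerate $2$-form on $U_i$ while preserving the product foliation.

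Next I would realize the ambient disk as a boundary connected sum: cut a small half-disk inside each $U_i$ near the basepoint, and glue the two remaining pieces $D_1,D_2$ along the resulting chord by an orientation-reversing diffeomorphism to obtain a new disk $\tilde D\cong D^2$. Place $(\mathcal{F}_1,\omega_1)$ on $D_1\times\mathbb{R}^{n-2}$ and the pullback of $(\mathcal{F}_2,\omega_2)$ by $\mathrm{id}\times\gamma_1(1)$ on $D_2\times\mathbb{R}^{n-2}$, so that on the boundary of the second piece the foliation becomes $\gamma_2(\,\cdot\,)\circ\gamma_1(1)$, matching the concatenation formula. Along the gluing chord, both pieces are in the normalized product form from Step~1, so the two foliations and the two $2$-forms match and together give a smooth global pair $(\mathcal{F},\omega)$ on $\tilde D\times\mathbb{R}^{n-2}$.

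Finally I would identify $\tilde D$ with $D^2$ so that $\partial\tilde D$ is parametrized consecutively by $\partial D_1$ minus chord and then $\partial D_2$ minus chord; by construction this yields exactly the concatenated data $(\gamma_1*\gamma_2,\alpha)$ on the boundary, so $(**)$ holds. Property $(*)$ is inherited from the pieces, since outside the union of the two compact supports the foliation is product, and the non-degeneracy $\omega|_{\mathcal{F}}\neq 0$ holds piecewise and extends across the gluing because both pieces agree with the same standard form there. The main obstacle is Step~1, which requires a joint normalization of the leaves and of the $2$-form on a single collar containing both $A_i^0$ and $A_i^1$, compatible with the boundary data and with $(*)$; once this is done, the boundary connected sum construction of Steps~2 and~3 proceeds in the spirit of the cut-and-paste argument of \cite{Mitsumatsu}.
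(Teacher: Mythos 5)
Your construction is, at bottom, the same cut-and-paste that the paper intends: the paper compresses each filling into a half-disk and glues along a seam, deferring the actual matching to Lemma 4.10 of \cite{Mitsumatsu}, and your boundary connected sum along a chord near the basepoint is that same gluing. Your choice of seam is in fact the right one: by condition (**) together with adjustedness, the foliation of each filling is genuinely horizontal on a corner neighbourhood of the basepoint (the radial extension of a locally constant foliated product), so a chord staying inside that corner meets only controlled data, whereas a literal angle-doubling seam through the centre of the disk would run through the uncontrolled interior of the filling.

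Two steps of your write-up do not work as stated. First, the normalization of $\omega_i$ in Step 1 is internally inconsistent: a fibrewise diffeomorphism $(x,y)\mapsto(\phi_y(x),y)$ that is the identity on the boundary cannot render $\omega_i$ independent of $y$ along the boundary, and if it could it would destroy the prescribed boundary datum $\alpha_i=\omega_i(X,-)$, which has no reason to be $y$-independent. The normalization is also unnecessary: near the chord the leaves are the horizontal disks $U\times\{y\}$, a leafwise $2$-form is just a nowhere-vanishing section of $\wedge^2T^*U$, and the two forms can be joined by a convex interpolation supported in the interior of a collar of the chord; non-degeneracy survives the interpolation because $\bar{\alpha}_1$ and $\bar{\alpha}_2$ agree near the junction (this is what adjustedness buys in the definition of the concatenated $\alpha$), so $\omega_1$ and $\omega_2$ induce the same leaf orientation there. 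Second, be careful with the twist: with the paper's conventions, pulling the second filling back by $\mathrm{id}\times\gamma_1(1)$ produces the foliated product of the conjugate path $\gamma_1(1)^{-1}\circ\gamma_2(\cdot)\circ\gamma_1(1)$, not of $\gamma_2(\cdot)\circ\gamma_1(1)$. No twist is needed at all: the factor $\gamma_1(1)$ in the concatenation formula is exactly the relabelling that occurs when a leaf of $\xi_1$ arriving at level $\gamma_1(1)(y)$ continues into the leaf of $\xi_2$ through that level, so gluing the untwisted second filling already induces the concatenated pair on $\partial\tilde{D}$. With these two corrections your argument is complete and coincides with the paper's.
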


\begin{proof}
Let us consider $(\gamma_1,\alpha_1)$. Define \[\{(r exp(2\pi it),\gamma_1(2t,y)):y\in \mathbb{R}^{n-2},\ t\in [0,1/2],\ r\in [0,1]\}\] \[\stackrel{\Phi_1}{\to}\{(r exp(2\pi it),\gamma_1(t,y)):y\in \mathbb{R}^{n-2}, t\in[0,1],\ r\in [0,1]\}\] by $\Phi_1(r exp(2\pi it),\gamma_1(2t,y))=(r exp(2\pi i.2t),\gamma_1(2t,y))$. Set $(\bar{\mathcal{F}}_1,\bar{\omega}_1)=(\Phi^*\mathcal{F}_1, \Phi^*\omega_1)$, where $(T\mathcal{F}_1,\omega_1)\in \bar{\Delta}_{n-2}(D^2\times \mathbb{R}^{n-2})$ be the pair given by the fillability of $(\gamma_1,\alpha_1)$. Again the new pair satisfies the non-degeneracy condition.\\

Do the similar arrangement in view of the concatenation arrangement. Now the rest of the proof is same as {\bf Lemma 4.10} in \cite{Mitsumatsu} and we skip it.

 \end{proof}
 
 \begin{remark}
 \label{Product-concatenation}
 Observe that in the concatenation process $\gamma(1)=\gamma_2(1)\circ \gamma_1(1)$ and we shall treat the product $\gamma_2(1)\circ \gamma_1(1)$ and concatenation in place of each other without mention.
 \end{remark}

\subsection{An Example}
\label{Example}
 Consider the foliation on the torus $S^1\times S^1$ by lines of constant slope $a$, i.e, the foliation is given by the kernel of the one form $d\theta-ad\phi$ where $(\theta,\phi)$ are coordinates on $\S^1\times S^1$. Let $D^2$ have the polar coordinate $(r,\phi)$ and let $\lambda_0,\lambda_{1/2}\lambda_1$ be cutoff functions on $[0,1]$ such that $\lambda_i=1$ on a neighborhood of $i\in [0,1]$ and $supp(\lambda_0)\cap supp(\lambda_1)$ is empty. Then the kernel of the one form \[\lambda_1(r)(d\theta-ad\phi)+\lambda_{1/2}(r)dr+\lambda_0(r)d\theta\] on $D^2\times S^1$ with coordinates $(r,\phi,\theta)$ defines a co-dimension-$1$ foliation $\mathcal{F}$ on the solid torus $D^2\times S^1$ whose restriction on the boundary torus $S^1\times S^1$ is the given foliation, i.e, $Ker(\theta-ad\phi)$. On the inner solid torus $D_{1/2}^2\times S^1$ the foliation is given by the taut foliation.\\

On the torus consider the one form $\alpha=ad\theta+d\phi$. Observe that \[Ker(d\theta-ad\phi)=\langle(a\partial_{\theta}+\partial_{\phi})\rangle\] So $\alpha_{\mid Ker(d\theta-ad\phi)}\neq 0$.\\

Now consider the two form $\omega$ whose restriction on $\{(r,\phi):0\leq r \leq 1/2\}\times S^1$ is given by $(\lambda_0(r)rdr+\lambda_{1/2}(r)d\theta)\wedge d\phi$ and whose restriction on $\{(r,\phi):1/2\leq r\leq 1\}\times S^1$ is given by \[\frac{1}{2}(2\lambda_1(r)dr+\lambda_{1/2}(r)d\theta)\wedge d\phi+\frac{1}{2}d\theta \wedge (\lambda_{1/2}(r)d\phi-2a\lambda_1(r)dr)\] Observe that on the boundary torus $S^1\times S^1$, $\omega(\partial_r,-)=\alpha$ and $\omega_{\mid T\mathcal{F}}\neq 0$.\\

A tubular neighborhood of $S^1\subset \mathbb{R}^{n-2}$ is given by $S^1\times D^{n-3},\ [n\geq 4]$. Consider a periodic curve $[0,1]\to Diff_c\mathbb{R}^{n-2}$ which is constant outside $S^1\times D^{n-3}$ and whose restriction to $S^1\times D^{n-3}$ induces a foliated $S^1\times D^{n-3}$-product structure $\xi$ over $S^1$, i.e, a co-dimension-$(n-2)$ foliation on $S^1\times S^1\times D^{n-3}$ whose restriction on $S^1\times S^1\times \{x\},\ x\in D^{n-3}$ is given by $Ker(d\theta-f(x)d\phi)$, where $f:D^{n-3}\to [0,1]$ is a non-vanishing map which is zero near $\partial D^{n-3}$.\\

Observe that if we set $f(x)=a$ we get the foliation mentioned above. Let $\omega(x)$ be the two form achieved by replacing $a$ by $f(x)$ in $\omega$. Now one just need to interpolate by introducing the function $g:D^{n-3}\to [0,1]$ such that $g\equiv 0$ in a neighborhood of $\partial D^{n-3}$ and $g\equiv 1$ on $supp(f)$. The resulting one form whose kernel defines the foliation is given by \[(1-g(x))d\theta+g(x)(\lambda_1(r)(d\theta-f(x)d\phi)+\lambda_{1/2}(r)dr+\lambda_0(r)d\theta)\] and the two form is given by \[(1-g(x))rdr\wedge d\theta+g(x)\omega(x)\] 

\subsection{The group $Diff_c\mathbb{R}^{n-2}$} In this subsection we recall some results regarding the group $Diff_c\mathbb{R}^{n-2}$. Let us set $k-n-2$. Let $C^{\infty}_K(\mathbb{R}^k,\mathbb{R}^k)$ be the vector space of smooth maps from $\mathbb{R}^k$ to itself with support a compact set $K\subset \mathbb{R}^k$ and equipped with $C^{\infty}$-topology. The inductive limit of $C^{\infty}_K(\mathbb{R}^k,\mathbb{R}^k)$ on $K$ will be denoted by $C^{\infty}_c(\mathbb{R}^k,\mathbb{R}^k)$. Now $Diff_c\mathbb{R}^{n-2}$ is a manifold modeled on $C^{\infty}_c(\mathbb{R}^k,\mathbb{R}^k)$. An atlas is given by different translates by elements $g\in Diff_c\mathbb{R}^k$ of small enough neighborhoods $U_g$ depending on $g$ such that $U_g$ is so small that $g+U_g\subset Diff_c\mathbb{R}^k$. For any map $f:J\to C^{\infty}_c(\mathbb{R}^k,\mathbb{R}^k)\subset Diff_c\mathbb{R}^k$ is contained in some $C^{\infty}_K(\mathbb{R}^k,\mathbb{R}^k)$ for some compact set $K$. For more detailed information we refer to \cite{Mitsumatsu}.

\begin{proposition}(\cite{Haller})
\label{HaRyTe}
For $k\geq 2$ and $B\subset \mathbb{R}^k$ be open and bounded. Then there exists compactly supported smooth vector fields $X_1,...,X_6$ on $\mathbb{R}^k$, a $C^{\infty}$-open neighborhood $\mathcal{W}$ of the identity in $Diff^{\infty}_c(B)$, and a smooth mappings $\sigma_1,...,\sigma_6:\mathcal{W}\to Diff^{\infty}_c\mathbb{R}^k$ such that for all $g\in \mathcal{W}$, \[g=[\sigma_1(g),expX_1]\circ ...\circ [\sigma_6(g),expX_6]\] where the commutator $[a,b]=aba^{-1}b^{-1}$. Moreover the vector fields are close to zero.
\end{proposition}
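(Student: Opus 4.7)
The plan is to combine a parametric smooth fragmentation of $g$ with the classical Anderson telescoping/displacement trick, keeping smoothness in $g$ and smallness of the vector fields throughout.

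First, I would fix an open cover $B \subset U_1 \cup U_2 \cup U_3$ by small open balls, each displaceable in $\R^k$ (here $k \geq 2$ is used to have room to rotate and translate). Shrinking $\mathcal{W}$ so that every $g \in \mathcal{W}$ lies in the canonical exponential chart of $\mathrm{Diff}^{\infty}_c(B)$, write $g = \exp(Y_g)$ with $Y_g$ small and smooth in $g$, then fragment by integrating the cut-offs of the generating vector field against a partition of unity subordinate to $\{U_i\}$. This yields $g = g_1(g) \circ g_2(g) \circ g_3(g)$ with $\mathrm{supp}\, g_i(g) \subset U_i$ and each factor depending smoothly on $g$ and close to the identity.

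Next, for each $i$ I would realise the displacement of $U_i$ as a composite $\phi_i = \exp(X_{2i}) \circ \exp(X_{2i-1})$ with both $X_{2i-1}$ and $X_{2i}$ of arbitrarily small $C^{\infty}$-norm, chosen so that the iterates $\phi_i^{\,n}(\overline{U_i})$ are pairwise disjoint and cluster to a point. The telescoping product
\[ \tau_i(h) \;=\; \prod_{n \geq 0} \phi_i^{\,n}\, h\, \phi_i^{-n} \]
is then locally finite, smooth in $h$, and satisfies $[\tau_i(h), \phi_i] = h$ by direct computation. A standard identity rewrites $[\tau_i(h),\, \exp(X_{2i}) \circ \exp(X_{2i-1})]$ as a product of two commutators, one with $\exp X_{2i-1}$ and one with $\exp X_{2i}$, whose first arguments depend smoothly on $h$. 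Applying this to $h = g_i(g)$ for $i = 1, 2, 3$ and concatenating gives the desired factorisation with six commutators and smooth maps $\sigma_j: \mathcal{W} \to \mathrm{Diff}^{\infty}_c \R^k$.

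The main obstacle is reconciling two competing demands: smoothness of the $\sigma_j$ in the parameter $g$, and the smallness of each vector field $X_j$. A single field large enough to displace $U_i$ would not be small; the resolution is to realise each displacement as a composite of two arbitrarily small flows and pay the price of doubling the commutator count from three to six. Once this splitting is in place, smoothness of the $\sigma_j$ follows formally from smoothness of the fragmentation, of conjugation, and of the locally finite telescoping product, which is the combinatorial heart of the argument in \cite{Haller}.
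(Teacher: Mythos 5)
First, a point of comparison: the paper does not prove this proposition at all --- it is imported verbatim from Haller--Rybicki--Teichmann \cite{Haller} --- so your sketch can only be measured against internal consistency and against what the cited argument must accomplish. Your algebraic skeleton (smooth fragmentation in a chart, the telescoping identity $[\tau_i(h),\phi_i]=h$ granted disjoint supports, and $[a,bc]=[a,b]\cdot\bigl[bab^{-1},bcb^{-1}\bigr]$) is fine, but two steps fail, and the first is fatal for exactly the clause the main text needs.

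The fatal step is the claim that each displacement $\phi_i$ of $U_i$ can be realised as $\exp(X_{2i})\circ\exp(X_{2i-1})$ with both fields of arbitrarily small norm. The time-one flow of a field with $\sup|X|<\varepsilon$ moves no point by more than $\varepsilon$, so the composite of two such flows moves no point by more than $2\varepsilon$ and cannot satisfy $\phi_i(U_i)\cap U_i=\emptyset$ once $U_i$ contains a ball of definite radius; and the $U_i$ must have definite radius, since three of them are to cover the fixed bounded set $B$. Thus either the $X_j$ are genuinely small and no finite composite of their flows displaces $U_i$, or they are not small and the ``moreover'' clause fails --- and that clause is precisely what Section 5 uses to place the factors $g\,expX_jg^{-1}$ among the $72$ elements of $V_{\varepsilon}$. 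Shrinking the $U_i$ until small fields do displace them forces the number of fragmentation pieces, hence of commutators, to blow up and to depend on $k$, contradicting the fixed count of six; so the doubling from three to six buys nothing, and the true mechanism in \cite{Haller} cannot be displacement of fixed balls by exponentials of small fields. The second gap is the infinite product $\tau_i(h)=\prod_{n\geq 0}\phi_i^{\,n}h\phi_i^{-n}$: because the supports $\phi_i^{\,n}(\overline{U_i})$ cluster at a point, the product is not locally finite there, and its convergence in $Diff^{\infty}_c$ with smooth dependence on $h$ is exactly the loss-of-derivatives problem (conjugation by a contraction inflates derivatives of order at least two), which is the analytic heart of smooth perfectness and cannot be dismissed as ``locally finite and smooth in $h$.''
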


\begin{proposition}(\cite{Tsuboi})
\label{Tsu}
Let $U$ be an open subset of a manifold $M$ and let $h$ be a diffeomorphism of $M$ such that $U\cap f(U)$ is empty also assume that $a,b\in Diff^{\infty}_c(M)$ are supported in $U$, then the commutator is the product of four conjugates of $h$ and $h^{-1}$, more precisely \[[a,b]=h(ch^{-1}c^{-1})(bchc^{-1}b^{-1})(bh^{-1}b^{-1})\] where $c=h^{-1}ah$.
\end{proposition}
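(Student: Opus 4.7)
The plan is to prove the identity by a direct algebraic verification, once the geometric hypothesis is converted into the appropriate commutation relations in $\mathrm{Diff}^{\infty}_c(M)$.

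First I would observe that the hypothesis $U\cap h(U)=\emptyset$ is equivalent (by applying $h^{-1}$) to $h^{-1}(U)\cap U=\emptyset$. Since $a$ and $b$ are supported in $U$, the conjugate $c=h^{-1}ah$ is supported in $h^{-1}(U)$. Therefore $c$ has support disjoint from the supports of $a$ and $b$, which gives the key algebraic relations
\[
ca=ac,\qquad cb=bc.
\]
These are the only non-obvious inputs; everything else will be formal manipulation in the group.

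Next I would record two identities coming directly from the definition of $c$. From $c=h^{-1}ah$ we get $hch^{-1}=a$, equivalently
\[
hc=ah \qquad\text{and}\qquad c^{-1}h^{-1}=h^{-1}a^{-1}.
\]
Using the first, $h\cdot ch^{-1}c^{-1}=ah\cdot h^{-1}c^{-1}=ac^{-1}$. So the right-hand side of the claimed formula becomes
\[
ac^{-1}\cdot bchc^{-1}b^{-1}\cdot bh^{-1}b^{-1}.
\]
Now I would push $c^{-1}$ past $b$ using $c^{-1}b=bc^{-1}$: the middle factor becomes $cbhb^{-1}c^{-1}$, and cancelling $c^{-1}c$ yields
\[
a\cdot bhb^{-1}\cdot c^{-1}\cdot bh^{-1}b^{-1}.
\]
Pushing $c^{-1}$ past $b$ once more and then using $c^{-1}h^{-1}=h^{-1}a^{-1}$, the tail simplifies to $b\cdot h^{-1}a^{-1}\cdot b^{-1}$, so the whole expression becomes
\[
a\cdot bh b^{-1}\cdot b h^{-1}a^{-1}b^{-1}=a\cdot b\cdot(hh^{-1})\cdot a^{-1}b^{-1}=aba^{-1}b^{-1}=[a,b],
\]
which finishes the verification.

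The whole argument is a one-line group-theoretic computation; the only genuine content is the step where the geometric hypothesis $U\cap h(U)=\emptyset$ is used to extract the commutations $[a,c]=[b,c]=1$. So the ``hard part'' is really just spotting the correct conjugating element $c=h^{-1}ah$: it is the unique choice that both lies in the disjoint region $h^{-1}(U)$ (giving commutation with $a,b$) and satisfies $hch^{-1}=a$ (allowing $h$ to be absorbed into $a$). Everything else is bookkeeping.
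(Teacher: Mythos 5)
Your verification is correct: the only geometric input is that $c=h^{-1}ah$ is supported in $h^{-1}(U)$, which is disjoint from $U$ by the hypothesis (the paper's ``$U\cap f(U)$'' is a typo for $U\cap h(U)$), and the rest is the algebraic cancellation you carry out, which checks out line by line (note you only actually use $cb=bc$ and the defining relation $hc=ah$; the commutation $ca=ac$ is never needed). The paper gives no proof of this proposition, citing Tsuboi instead, and your computation is precisely the standard argument behind that citation, so there is nothing to compare beyond saying it matches.
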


\begin{corollary}(\cite{Mitsumatsu})
\label{Tsu-cor}
Let $M^k$ be a smooth connected manifold and let $h\in Diff^{\infty}_c(M^k)$ any element other than identity. Let $a_i,b_i,i=1,...,r$ be elements of $Diff^{\infty}_c(M^k)$ such that for each $i$ the diffeomorphisms $a_i,\ b_i$ have support in $int(U_i)$ where $U_i$ is a closed $k$-ball in $M^k$. Then \[f:=\Pi_{i=1}^r [a_i,b_i]\ is\ a\ product\ of\ 4r\ conjugates\ of\ h\ h^{-1}\]
\end{corollary}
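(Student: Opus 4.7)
The plan is to apply Proposition \ref{Tsu} to each factor $[a_i, b_i]$ separately. The hypothesis of that proposition requires a diffeomorphism that displaces $U_i$, which the given $h$ need not do; so for each $i$ I would replace $h$ by a conjugate $h_i := \psi_i^{-1} h \psi_i$ chosen so that $U_i \cap h_i(U_i) = \emptyset$. Once this is achieved, Proposition \ref{Tsu} writes $[a_i, b_i]$ as a product of four conjugates of $h_i^{\pm 1}$, and since $h_i$ is itself a conjugate of $h$, these are four conjugates of $h^{\pm 1}$. Multiplying over $i$ yields the desired decomposition of $f$ into $4r$ conjugates.

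To construct such a $\psi_i$, I would proceed as follows. Since $h$ is not the identity and $M^k$ is connected, there is a point $p \in M^k$ with $h(p) \neq p$; by continuity, pick an open neighborhood $V$ of $p$ small enough that $V \cap h(V) = \emptyset$. It then suffices to find $\psi_i \in \mathrm{Diff}^{\infty}_c(M^k)$ with $\psi_i(U_i) \subset V$: if that holds, then $\psi_i(U_i)$ and $h(\psi_i(U_i)) \subset h(V)$ are disjoint, and applying $\psi_i^{-1}$ gives $U_i \cap h_i(U_i) = \emptyset$, as required.

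The key input, and the step I expect to deserve the most care, is the existence of $\psi_i$. This follows from the standard homogeneity principle for embedded $k$-balls: in a connected manifold $M^k$, any two smoothly embedded closed $k$-balls are carried to each other by an element of $\mathrm{Diff}^{\infty}_c(M^k)$ (in the oriented case, after matching orientations, which can always be arranged by composing with a reflection). Applied to $U_i$ and any sufficiently small closed $k$-ball sitting inside $V$, this yields the required $\psi_i$. The homogeneity itself is a classical consequence of the isotopy extension theorem together with the connectedness of the space of orientation-preserving embeddings $D^k \hookrightarrow M^k$. Once each $h_i$ is in hand, Proposition \ref{Tsu} applies verbatim to each pair $(a_i, b_i)$ inside $U_i$, and concatenating the resulting four-fold decompositions across $i = 1, \ldots, r$ produces $f$ as a product of $4r$ conjugates of $h$ and $h^{-1}$.
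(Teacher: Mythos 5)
Your proposal is correct and is essentially the intended argument (the paper only cites \cite{Mitsumatsu} for this corollary; the proof there proceeds exactly as you describe): displace a neighbourhood of a non-fixed point of $h$, conjugate each $U_i$ into it by the disk/homogeneity theorem for compactly supported diffeomorphisms of a connected manifold, apply Proposition \ref{Tsu} to each $[a_i,b_i]$ with $h_i=\psi_i^{-1}h\psi_i$, and observe that conjugates of $h_i^{\pm1}$ are conjugates of $h^{\pm1}$. No gaps; the only point needing care is the one you flagged, namely the existence of $\psi_i$, which the isotopy extension theorem supplies.
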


\subsection{Concluding The Proof of \ref{Key}} We consider a pair $(\gamma,\alpha)$ and show that it is fillable. The proof is similar to \cite{Mitsumatsu}. \\

\begin{enumerate}
\item[(a)] Define $V_{\varepsilon}:=\{id+e:e\in C^{\infty}_c(\mathbb{R}^k,\mathbb{R}^k)\ with\ max_{x}|de_x|<\varepsilon\}$. $V_1$ is an open contractible neighborhood of $id$ in $Diff^{\infty}_c(\mathbb{R}^{n-2})$. Then as in \cite{Mitsumatsu} there exists $\varepsilon>0$ such that any composition of $72$ elements of $V_{\varepsilon}$ is in $V_1$.\\

\item[(b)] Recall from \ref{Example} that the periodic curve of the pair that has been filled is of the form $h_f(t,z)=z,\ z\notin S^1\times D^{n-3}$ and $h_f(t,z)=(\theta+t.f(x),x),\ z=(\theta,x)\in S^1\times D^{n-3}$. Here $f:D^{n-3}\to [0,1]$ is a smooth map vanishing near the boundary. Observe that $h_f(t)^{-1}=h_{-f}(t)$. So for $f$ small enough $h_f(t)^{-1},h_{-f}(t)\in V_{\varepsilon}$. Set $h=h_f(1)$.\\

\item[(c)] $U\subset S^1\times D^{n-3}$ be an open ball such that $U\cap h(U)$ is empty. Let $B \subset \mathbb{R}^{n-2}$ be an open ball which contains $S^1\times D^{n-3}$ and $A$ be an open ball in $\mathbb{R}^{n-2}$ containing $\bar{B}$.\\

\item[(d)] $g_t:\mathbb{R}^{n-2}\to \mathbb{R}^{n-2},\ t\in [0,1]$ be a compactly supported isotopy with $g_0=id$ and $g_1(\bar{A})\subset U$. Set $g=g_1$.\\

\item[(e)] By \ref{HaRyTe} there exists $\mathcal{W}\subset V_1$ a $C^{\infty}$-open neighborhood of $id$ in $Diff^{\infty}_c(B)$, $\sigma_i:\mathcal{W}\to Diff_c^{\infty}(A),\ i=1,...,6$ and compactly supported vector fields $X_i,\ i=1,...,6$. We can assume that $supp(X_i)\subset A,\ i=1,...,6$. We shall later refer how small $\mathcal{W}$ needs to be.\\   

\end{enumerate}

\begin{remark}
\label{Conjugate-fillable}
Now observe that $(\gamma,\alpha)$ is fillable if and only if $(g\circ \gamma \circ g^{-1},(g^{-1})^*\alpha)$ is fillable.
\end{remark}

 As in \cite{Mitsumatsu} $g\circ \gamma \circ g^{-1}$ is the concatenation of of $g\circ \gamma_i \circ g^{-1},\ i=1,...,q$, where $\gamma_i(t)=\gamma(\frac{i+t}{q})\circ \gamma(\frac{i}{q})^{-1}$. All of then have images in $\mathcal{W}$.\\

By \ref{HaRyTe} each $\gamma_i(1)$ is a product of the commutators $[\sigma_j(\gamma_i(1)),expX_j],\ j=1,...,6$. As we know that the conjugate of the commutator is equal to the commutator of the conjugates, we get \[g[\sigma_j(\gamma_i(1)),expX_j]g^{-1}=h\circ ((h^{-1}g\sigma_j(\gamma_i(1))g^{-1}h)\circ h^{-1}\circ (h^{-1}g\sigma_j(\gamma_i(1))^{-1}g^{-1}h))\circ\] \[ ((g\ expX_jg^{-1})\circ (h^{-1}g\sigma_j(\gamma_i(1))g^{-1}h)\circ h \circ (h^{-1}g\sigma_j(\gamma_i(1))^{-1}g^{-1}h)\circ (g\ expX_j^{-1}g^{-1}))\circ \] \[((g\ expX_jg^{-1})\circ h^{-1} \circ (g\ expX_j^{-1}g^{-1}))\] 

Observe that it is a product of $12$ diffeomorphisms and $j$ runs from $1$ to $6$. So in total it is a product of $72$ diffeomorphisms. So we choose $\mathcal{W}$ from item (e) above accordingly.\\

Also observe that in the above expression for $g[\sigma_j(\gamma_i(1)),expX_j]g^{-1}$ is a product of different conjugates of $h$ and $h^{-1}$. So from \ref{Conjugate-fillable} and \ref{Product-concatenation} it is fillable.

\end{document}